\documentclass[a4paper,12pt]{article}
\usepackage[utf8]{inputenc}
\usepackage{amsfonts}
\usepackage{amssymb}
\usepackage{latexsym}
\usepackage{amsmath}
\usepackage{amsthm}
\usepackage{graphicx}

\setlength{\textheight}{24cm} \setlength{\textwidth}{16cm}
\setlength{\hoffset}{-1.3cm} \setlength{\voffset}{-1.8cm}

%

\def\ZZ{\mathbb{Z}}
\def\RR{\mathbb{R}}
\def\QQ{\mathbb{Q}}
\def\NN{\mathbb{N}}

\newcommand{\bsalpha}{\boldsymbol{\alpha}}
\newcommand{\bsh}{\boldsymbol{h}}
\newcommand{\bsg}{\boldsymbol{g}}
\newcommand{\bsk}{\boldsymbol{k}}

\newcommand{\bsm}{\boldsymbol{m}}
\newcommand{\bsx}{\boldsymbol{x}}
\newcommand{\bsy}{\boldsymbol{y}}
\newcommand{\bsz}{\boldsymbol{z}}
\newcommand{\bszero}{\boldsymbol{0}}
\newcommand{\bsell}{\boldsymbol{\ell}}

\newcommand{\cP}{\mathcal{P}}
\newcommand{\rd}{\,{\rm d}}

\usepackage{enumerate,color}

\newtheorem{theorem}{Theorem}
\newtheorem{lemma}[theorem]{Lemma}

\newtheorem{remark}[theorem]{Remark}
\newtheorem{definition}[theorem]{Definition}

\newcommand{\abs}[1]{\Big\vert #1 \Big\vert}	
\newcommand{\mr}[1]{\mathring{#1}}	
\newcommand{\ol}[1]{\overline{#1}}
\newcommand{\ls}{\langle}
\newcommand{\rs}{\rangle}
\newcommand{\lsb}{\Big\langle}
\newcommand{\rsb}{\Big\rangle}

\newcommand{\bsxi}{\boldsymbol{\xi}}

\newcommand{\R}{\mathbb{R}}
\newcommand{\N}{\mathbb{N}}
\newcommand{\Z}{\mathbb{Z}}
\newcommand{\C}{\mathbb{C}}
\newcommand{\X}{\mathbb{X}}
\renewcommand{\S}{\mathcal{S}}
\newcommand{\eps}{\varepsilon}
\renewcommand{\a}{\alpha}
\renewcommand{\phi}{\varphi}
\newcommand{\F}{\mathcal{F}} 
\newcommand{\supp}{{\rm supp}}
\newcommand{\sinc}{{\rm sinc}}
\newcommand{\B}{\ensuremath{\mathbf{B}}}
\newcommand{\Bspt}{\ensuremath{\mathbf{B}^s_{p,\theta}}}
\newcommand{\Bo}{\ensuremath{\mr{\mathbf{B}}^s_{p,\theta}}}
\renewcommand{\H}{\ensuremath{\mathbf{H}}}
\newcommand{\Hsp}{\ensuremath{\mathbf{H}^s_{p}}}
\newcommand{\Hs}{\ensuremath{\mathbf{H}^s_{2}}}
\newcommand{\Ho}{\ensuremath{\mr{\mathbf{H}}^s_{p}}}

\allowdisplaybreaks

\title{Lattice based integration algorithms: Kronecker sequences and rank-1 lattices}

\author{Josef Dick, Friedrich Pillichshammer, Kosuke Suzuki, Mario Ullrich, \\ and Takehito Yoshiki\thanks{The research of J.\ Dick, K. Suzuki and T. Yoshiki was supported under the Australian Research Councils Discovery Projects funding scheme (project number DP150101770). F. Pillichshammer was supported by the Austrian Science Fund (FWF): Projects F5509-N26 (Pillichshammer), which is part of the Special Research Program ``Quasi-Monte Carlo Methods: Theory and Applications''.} 
}

\begin{document}

\maketitle

\begin{abstract}
We prove upper bounds on the order of convergence of lattice based algorithms for numerical integration in function spaces of dominating mixed smoothness on the unit cube with homogeneous boundary condition. More precisely, we study worst-case integration errors for Besov spaces of dominating mixed smoothness $\Bo$, which also comprise the concept of Sobolev spaces of dominating mixed smoothness $\Ho$ as special cases. The considered algorithms are quasi-Monte Carlo rules with underlying nodes from $T_N(\Z^d) \cap [0,1)^d$, where $T_N$ is a real invertible generator matrix of size $d$. For such rules the worst-case error can be bounded in terms of the Zaremba index of the lattice $\X_N=T_N(\Z^d)$. We apply this result to Kronecker lattices and to rank-1 lattice point sets, which both lead to optimal error bounds up to $\log N$-factors for arbitrary smoothness $s$.
The advantage of Kronecker lattices and classical lattice point sets is that the run-time of algorithms generating these point sets is very short.
\end{abstract}

\centerline{\begin{minipage}[hc]{150mm}{
{\em Keywords:} Numerical integration, quasi-Monte Carlo, Kronecker lattice, rank-1 lattice points, Zaremba index, Besov space.\\
{\em MSC 2000:} 65D30, 65D32, 11K31.}
\end{minipage}}

\section{Introduction}

In this paper we study numerical integration of smooth functions on the $d$-dimensional unit cube which satisfy homogeneous boundary conditions. More precisely, we consider Besov spaces of dominating mixed smoothness $\Bo$ which also comprise the concept of Sobolev spaces of dominating mixed smoothness $\Ho$ as special cases. The exact definition of these spaces requires some preparation and will be given in Section~\ref{sec:spaces}. For the moment we just note that the parameter $s$ denotes the underlying smoothness of the functions and that for $f \in \Bo$ we have ${\supp}(f) \subset [0,1]^d$.

We study numerical integration $$I_d(f)=\int_{[0,1]^d}f(\bsx)\rd \bsx  \ \ \ \mbox{ for $f \in\Bo$}$$ with linear algorithms of the form $$Q_{N,d}(f)=\sum_{j=0}^{N-1} a_j f(\bsx_j)$$ for given sample points $\bsx_0,\ldots,\bsx_{N-1} \in [0,1]^d$ and real weights $a_0,\ldots,a_{N-1}$. For the special choice $a_j=1/N$ for all $j=0,\ldots,N-1$ we speak of quasi-Monte Carlo (QMC) algorithms.

The {\it worst-case error} of an algorithm $Q_{N,d}$ is the worst absolute 
integration error of $Q_{N,d}$ over the unit-ball of $\Bo$, i.e. 
\[
{\rm wce}(Q_{N,d},\Bo)=\sup_{\|f\|_{\Bo} \le 1} \left|I_d(f) -Q_{N,d}(f)\right|,
\] 
where $\|\cdot\|_{\Bo}$ denotes the norm in $\Bo$.

As integration rules we use what we call {\it general lattice rules} which are of the form 
\begin{equation}
Q_{T}(f) \,=\, |\det(T)| \sum_{\bsx\in T(\Z^d)\cap[0,1)^d} f(\bsx)
\end{equation}
with an invertible matrix $T\in\R^{d\times d}$. Note that $$\mathbb{X}=T(\mathbb{Z}^d)=\{T\bsx \ : \ \bsx \in \mathbb{Z}^d\}$$ is a $d$-dimensional lattice in $\R^d$, i.e., a discrete additive subgroup of $\mathbb{R}^d$ not contained in a proper linear subspace of $\mathbb{R}^d$.

A specific example of such a rule is Frolov's cubature formula introduced in \cite{Frolov} (see also \cite{U16} and the survey paper by Temlyakov~\cite[Section~4.3]{teml2003}). Here a suitable generating matrix $T$ for the underlying lattice $\mathbb{X}$ is found as follows: Define the polynomial $p_d \in \mathbb{R}[x]$, $p_d(x)=-1+\prod_{j=1}^d (x-2j+1)$. Then $p_d$ has only integer coefficients, it is irreducible over the rationals and has $d$ different real roots, say $\xi_1,\ldots,\xi_d \in \mathbb{R}$. With these roots define the $d \times d$ Vandermonde matrix $B$ by $$B=(B_{i,j})_{i,j=1}^d =(\xi_i^{j-1})_{i,j=1}^d.$$ Then the generator of the lattice $\mathbb{X}$ used in Frolov's cubature formula is $$T=\frac{1}{a}(B^{\top})^{-1},$$ where $a >1$ is a shrinking factor. 

It was shown by Dubinin~\cite{Du2}, see also \cite{Te93} and \cite{UU16}, 
that Frolov's cubature formula  achieves the optimal convergence rate for 
the worst-case error in $\Bo$, which is 
\[
{\rm wce}(Q_T,\Bo) \asymp \frac{(\log N)^{(d-1)(1-1/\theta)}}{N^s},
\] 
where $N$ is the number of elements of $T(\Z^d)$ that belong to the unit 
cube $[0,1]^d$.
The lower bound for these spaces was proven in \cite{U16}. 
See also \cite{NUU16} for techniques to transfer such results to spaces 
without (or periodic) boundary conditions.
For the Sobolev(-Hilbert) spaces $\mr{\H}^s_2=\mr{\B}^s_{2,2}$ 
the result reads
\[
{\rm wce}(Q_T,\mr{\H}^s_2) \asymp \frac{(\log N)^{(d-1)/2}}{N^s}.
\] 

The problem with Frolov's cubature formula is that one needs to determine
 which points from the shrunk lattice belong to the unit cube $[0,1]^d$. 
This is in general a very difficult task, especially when the dimension $d$ 
is large\footnote{It is known that $N \sim |\det(T^{-1})|$ as the shrinking 
factor $a$ tends to infinity. This can be quantified as 
$\bigl|N- |\det(T^{-1})|\bigr| \lesssim \log^{d-1}(1+a^d)$ for all $a>1$, 
see Skriganov~\cite[Theorem~1.1]{skri}.}.\\

It is the aim of this paper to find other general lattice rules whose 
lattice points are faster to generate on a computer also for large 
dimensions $d$ and which also achieve optimal convergence rate for 
the worst-case error with respect to the smoothness $s$, up to $\log N$-terms.

We will prove a very general estimate for the worst-case error in terms of the Zaremba index of the lattice $\X=T(\Z^d)$, see Theorem~\ref{thm:general2} in Section~\ref{sec:genbd}. Then we apply this error estimate to two examples of general lattice rules, to so-called Kronecker lattices (Subsection~\ref{subsec:Kronecker}) and to rank-1 lattice point sets (Subsection~\ref{subsec:rank-1}). In both cases we achieve a convergence rate of order $O(N^{-s})$ for the worst-case error up to $\log N$-terms. The major advantage of these constructions is that it is automatically clear which points belong to the unit cube. Hence the proposed integration algorithms (which are in fact QMC rules)  can be implemented very efficiently.

\paragraph{Basic notation:} 
Throughout the paper $d \in \N$ denotes the dimension. 
By $\log$ we denote 
the dyadic logarithm. For $x \in \R$ we denote by $\{x\}$ the fractional part of $x$ and by $\ls x\rs$ the distance of $x$ to the nearest integer, i.e., $\ls x \rs:=\min_{m\in\Z}|x-m|$.

For $\bsell=(\ell_1,\ldots,\ell_d) \in \mathbb{N}_0^d$ we denote by
$$D^{\bsell} f=\frac{\partial^{\,|\bsell|}}{\partial x_1^{\ell_1}\ \partial x_2^{\ell_2}\ \cdots\ \partial x_d^{\ell_d}}f$$ the operator of partial differentiation with $|\bsell|=\ell_1+\cdots+\ell_d$. Furthermore, for $\bsxi \in \R^d$ let $e_{\bsxi}:\R^d \rightarrow \C$, $e_{\bsxi}(\bsx):=\exp(2 \pi \texttt{i}  \langle \bsxi , \bsx\rangle)$, where $\langle \cdot, \cdot\rangle$ denotes the usual inner product in $\R^d$.

Before we formulate the main results of this work in more detail we need some preparation, which is presented in Section~\ref{Prel}.

\section{Preliminaries}
\label{Prel}

\subsection{Basics of Fourier analysis}
\noindent
Let $L_p=L_p(\R^d)$, $0 < p\le\infty$, be the space of all measurable functions 
$f:\R^d\to\C$ such that 
\[
\|f\|_p := \Big(\int_{\R^d} |f(\bsx)|^p \rd \bsx \Big)^{1/p} < \infty,
\]
with the usual modification if $p=\infty$. 
We will also need $L_p$-spaces on compact domains $\Omega\subset\R^d$ 
instead of the entire $\R^d$. 
We write $\|f\|_{L_p(\Omega)}$ for the corresponding (restricted) $L_p$-norm. 

For $f\in L_1(\R^d)$ we define the Fourier transform
\[
\F f(\bsxi) 
\,=\, \int_{\R^d} f(\bsy)\, \overline{e_{\bsxi}(\bsy)} \rd \bsy, \qquad \bsxi\in\R^d,  
\]
and the corresponding inverse Fourier transform $\F^{-1}f(\bsxi)=\F f(-\bsxi)$.
Additionally, we define the spaces of 
continuous functions $C(\R^d)$, infinitely differentiable functions $C^\infty(\R^d)$ 
and infinitely differentiable functions with compact support $C^\infty_0(\R^d)$ 
as well as 
the \emph{Schwartz space} $\S=\S(\R^d)$ of 
all rapidly decaying infinitely differentiable functions on $\R^d$, i.e.,  
\[
\S := \bigl\{\varphi\in C^{\infty}(\R^d)\colon \|\varphi\|_{\bsk,\bsell}<\infty 
\;\text{ for all } \bsk,\bsell\in \N^d_0 \bigr\}\,,
\]
where
$$
\|\varphi\|_{\bsk,\bsell}:=\Big\| \prod_{j=1}^d (1+|x_j|)^{k_j}  |D^{\bsell} \varphi(\bsx) |\Big\|_{\infty}\,.
$$

The space $\mathcal{S}'(\R^d)$, the topological dual of $\mathcal{S}(\R^d)$, is also referred to as the set of tempered
distributions on $\R^d$. Indeed, a linear mapping $f:\mathcal{S}(\R^d) \to \C$ belongs
to $\mathcal{S}'(\R^d)$ if and only if there exist vectors $\bsk,\bsell \in \N_0^d$
and a constant $c = c_f$ such that
\begin{equation}\label{eq100}
    |f(\varphi)| \leq c_f\|\varphi\|_{\bsk,\bsell}
\end{equation}
for all $\varphi\in \mathcal{S}(\R^d)$. The space $\mathcal{S}'(\R^d)$ is
equipped with the weak$^{\ast}$-topology.
The convolution $\varphi\ast \psi$ of two
square-integrable
functions $\varphi, \psi$ is defined via the integral
\begin{equation*}\label{conv}
    (\varphi \ast \psi)(\bsx) = \int_{\R^d} \varphi(\bsx-\bsy)\psi(\bsy)\rd\bsy\,.
\end{equation*}
If $\varphi,\psi \in \mathcal{S}(\R^d)$, then $\varphi \ast \psi$ still belongs to
$\mathcal{S}(\R^d)$. 
In fact, the convolution operator can be extended to $\mathcal{S}(\R^d)\times L_1$, 
in which case
we have $\varphi \ast \psi\in\S(\R^d)$, and to 
$\mathcal{S}(\R^d)\times \mathcal{S}'(\R^d)$ via
$(\varphi\ast f)(\bsx) = f(\varphi(\bsx-\cdot))$. It makes sense point-wise and is 
a $C^{\infty}$-function in $\R^d$.
As usual, the Fourier transform can be extended to $\mathcal{S}'(\R^d)$
by $(\F f)(\varphi) := f(\F \varphi)$, where
$\,f\in \mathcal{S}'(\R^d)$ and $\varphi \in \mathcal{S}(\R^d)$. 
The mapping $\F:\S'(\R^d) \to \S'(\R^d)$ is a bijection.

\subsection{Function spaces}
\label{sec:spaces}

In this section we introduce the function spaces under consideration, namely, 
the Besov and Sobolev spaces of dominating mixed smoothness. 
There are several equivalent characterizations of these spaces, 
see \cite{Vyb06}. For our purposes, the most suitable 
is the characterization by local means (see \cite[Theorem~1.23]{Vyb06} or 
\cite[Section~4]{UU16}). 

\medskip

We start with the definition of the spaces that are defined on $\R^d$.
%
Let $\Psi_0,\Psi_1\in C_0^\infty(\R)$ be such that 
\begin{enumerate}
	\item[$(i)$] $|\F\Psi_0(\xi)|>0$\qquad for $|\xi|<\eps$,
	\item[$(ii)$] $|\F\Psi_1(\xi)|>0$\qquad for $\frac\eps2<|\xi|<2\eps$ and 
	\item[$(iii)$] $D^\a\F\Psi_1(0)=0$\qquad for all $0\le\a\le L$
\end{enumerate}
for some $\eps>0$. A suitable $L$ will be chosen in 
Definition~\ref{def:besov}. 
As usual, for $j\in\N$ we define 
\[
\Psi_j(x) \,=\, 2^{j-1}\, \Psi_1(2^{j-1} x), 
\]
and the ($d$-fold) tensorization
\begin{equation}\label{eq:Psi}
\Psi_{\bsm}(\bsx) \,=\, \prod_{i=1}^d \Psi_{m_i}(x_i),
\end{equation}
where $\bsm=(m_1,\dots,m_d)\in\N_0^d$ and $\bsx=(x_1,\dots,x_d)\in\R^d$. 

\begin{remark}\label{rem:up} \rm  There exist compactly supported functions $\Psi_0, \Psi_1$ satisfying {\it (i)-(iii)} above. 
Consider $\Psi_0$ to be the {\em up-function}, 
see Rvachev~\cite{Rv90}. This function satisfies $\Psi_0\in C^{\infty}_0(\R)$ with $\supp(\Psi_0)=[-1,1]$
and $\F\Psi_0(\xi) = \prod_{k=0}^\infty\, \sinc(2^{-k}\xi)$, $\xi\in\R$, where $\sinc$ denotes the normalized {\it sinus cardinalis} $\sinc(\xi) = \sin(\pi \xi)/(\pi \xi)$. If we define $\Psi_1\in C_0^\infty(\R)$ to be
\[
\Psi_1(x) \,:=\, \frac{\rd^L}{\rd x^L}\bigl(2\Psi_0(2\ \cdot)-\Psi_0\bigr)(x), 
\qquad x\in\R\,,
\]
it follows that
$
\F\Psi_1(\xi) \,=\, (2\pi \texttt{i} \xi)^{L}
\bigl(\F\Psi_0(\xi/2) - \F\Psi_0(\xi)\bigr).
$
It is easily checked that these functions satisfy the conditions {\it (i)-(iii)} above. In particular, {\it (i)} and {\it (ii)} are
satisfied with $\eps = 1$.
Moreover, we have for all $\bsm\in\N_0^d$ that the tensorized functions 
$\Psi_{\bsm}$ satisfy $\supp(\Psi_{\bsm})\subseteq[-1, 1]^d$. 
We will work with this choice in the sequel.
\end{remark}

Let us continue with the definition of the Besov spaces $\Bspt = \Bspt(\R^d)$ 
defined on the entire $\R^d$.

\begin{definition}[Besov space]\rm \label{def:besov}
Let $0 < p,\theta\le\infty$, $s\in\R$, and 
$\{\Psi_{\bsm}\}_{\bsm\in\N_0^d}$ be as above with $L+1>s$. 
The \emph{Besov space of dominating mixed smoothness} 
$\Bspt=\Bspt(\R^d)$ is the set of all $f\in \S'(\R^d)$ 
such that 
\[
\|f\|_{\Bspt} \,:=\, 
\Big(\sum_{\bsm\in\N_0^d} 2^{s |\bsm|_1 \theta}\, 
	\|\Psi_{\bsm} \ast f\|_p^\theta\Big)^{1/\theta} \,<\,\infty
\]
with the usual modification for $\theta=\infty$. 
\end{definition}

\begin{remark}\rm 
Other important function spaces are 
\emph{Sobolev spaces of dominating mixed smoothness}, which 
are denoted by $\Hsp=\Hsp(\R^d)$ ($1<s<\infty$).
In the case $s\in\N$ these spaces can be normed by
\[
\|f\|_{\Hsp} \,:=\, \Big(\sum_{\substack{\bsell\in\N_0^d\\ |\bsell|_\infty\le s}} \|D^{\bsell} f\|_p^p\Big)^{1/p}\,.
\]
Note that $\Hs = \B^s_{2,2}$, i.e., the Sobolev(-Hilbert) spaces $\Hs$ appear 
as the special case $p=\theta=2$ in the Besov space scale $\Bspt$, 
see e.g.~\cite[Chapt.\ 2]{ScTr87}. Moreover, the spaces $\B^s_{p,p}$ with $1\le p<\infty$ and $s\notin\N$ are 
called \emph{Sobolev-Slobodeckij spaces}.
\end{remark}

\begin{remark} \rm Different choices of $\Psi_0, \Psi_1$ in Definition~\ref{def:besov} 
lead to equivalent (quasi-)norms. 
In fact, it is not even necessary that $\Psi_0$ and $\Psi_1$ have compact support. 
However, for the proof of our results this specific choice is crucial.
\end{remark}

\bigskip

In this article we are interested in classes of functions
which are supported in the unit cube $[0,1]^d$,
i.e.~we consider the subclasses of $\Bspt(\R^d)$ and $\Hsp(\R^d)$
\begin{equation}\label{def:Bo}
\Bo \,:=\, \bigl\{ f\in \Bspt(\R^d)\colon \supp(f)\subset[0,1]^d\bigr\}
\end{equation}
and 
\begin{equation}\label{def:Ho}
\Ho \,:=\, \bigl\{ f\in \Hsp(\R^d)\colon \supp(f)\subset[0,1]^d\bigr\}.
\end{equation}

\bigskip

\noindent The next lemma collects some frequently used embeddings between 
Besov and Sobolev spaces. They will be useful in obtaining our results 
for Sobolev spaces directly from the results for Besov spaces.

\begin{lemma}\label{emb} Let $p \in (0,\infty]$, and let
$s\in \R$.
\begin{enumerate}
\item[(i)] We have the chain of embeddings
$$
    \B^s_{p,\min\{p,2\}} \;\hookrightarrow\; \Hsp \;\hookrightarrow\;
\B^s_{p,\max\{p,2\}}.
$$
\item[(ii)] For $p\ge2$ we have the embedding 
$$
\Ho \;\hookrightarrow\; \mr{\H}^s_2 \;=\; \mr{\B}^s_{2,2}.
$$
\end{enumerate}
\end{lemma}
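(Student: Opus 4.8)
The plan is to reduce everything to the basic structure of the Besov quasi-norm in Definition~\ref{def:besov}, namely that $\|f\|_{\Bspt}$ is an $\ell_\theta(\N_0^d)$-norm of the sequence $\bigl(2^{s|\bsm|_1}\|\Psi_{\bsm}\ast f\|_p\bigr)_{\bsm}$, and then invoke standard sequence-space and Littlewood--Paley facts. For part~(i) the crucial observation is that changing $\theta$ only changes the outer summation index $\theta$ while leaving the weights $2^{s|\bsm|_1}$ and the inner $L_p$-pieces $\|\Psi_{\bsm}\ast f\|_p$ untouched. Since $\ell_{\theta_1}\hookrightarrow\ell_{\theta_2}$ whenever $\theta_1\le\theta_2$, the two Besov embeddings $\B^s_{p,\theta_1}\hookrightarrow\B^s_{p,\theta_2}$ for $\theta_1\le\theta_2$ are immediate with norm constant $1$. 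Thus it remains only to sandwich $\Hsp$ between the two Besov scales, i.e.\ to prove $\B^s_{p,\min\{p,2\}}\hookrightarrow\Hsp\hookrightarrow\B^s_{p,\max\{p,2\}}$ directly.

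For this sandwich I would quote the classical Littlewood--Paley / Lizorkin--Triebel characterization of the dominating-mixed Sobolev (Bessel-potential) space: $\|f\|_{\Hsp}$ is equivalent to the $L_p$-norm of the square function $\bigl(\sum_{\bsm}4^{s|\bsm|_1}|\Psi_{\bsm}\ast f|^2\bigr)^{1/2}$, i.e.\ $\Hsp = \mathbf{F}^s_{p,2}$ in the Triebel--Lizorkin (F-)scale of mixed smoothness. The desired embeddings then follow from the general relations between B- and F-spaces, which themselves come from comparing the $\ell_\theta$-norm with the $\ell_2$-square-function norm via Minkowski's integral inequality: for $\theta\le\min\{p,2\}$ one has $\mathbf{B}^s_{p,\theta}\hookrightarrow\mathbf{F}^s_{p,2}$, and for $\theta\ge\max\{p,2\}$ one has $\mathbf{F}^s_{p,2}\hookrightarrow\mathbf{B}^s_{p,\theta}$. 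Specializing to $\theta=\min\{p,2\}$ and $\theta=\max\{p,2\}$ gives exactly the chain in~(i). These are standard facts for which I would cite \cite{Vyb06} or \cite{ScTr87} rather than reprove them.

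Part~(ii) is then a short consequence. For $p\ge2$ the right-hand embedding in~(i) gives $\Hsp\hookrightarrow\B^s_{p,\max\{p,2\}}=\B^s_{p,p}$ (since $p\ge2$), and a separate dimension-free argument handles passing from $p$ to~$2$: on functions supported in the bounded set $[0,1]^d$, the integrability exponent can be \emph{lowered} from $p$ to $2$ because $L_p(\Omega)\hookrightarrow L_2(\Omega)$ for $p\ge2$ on a domain of finite measure. Applying this to each building block $\Psi_{\bsm}\ast f$ (whose support stays in a fixed neighbourhood of $[0,1]^d$, as $\supp(\Psi_{\bsm})\subseteq[-1,1]^d$ by Remark~\ref{rem:up}) converts the $\B^s_{p,p}$-norm into a $\B^s_{2,2}$-type control, yielding $\Ho\hookrightarrow\mr{\B}^s_{2,2}=\mr{\H}^s_2$. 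I would carry out~(ii) in the order: first lower $\theta$ via~(i), then lower $p$ via the finite-measure $L_p\hookrightarrow L_2$ embedding applied blockwise.

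The main obstacle I anticipate is the last step of part~(ii): the blockwise $L_p\hookrightarrow L_2$ trick is clean for a single convolution piece, but one must check that the support of $\Psi_{\bsm}\ast f$ remains inside a fixed compact set uniformly in $\bsm$ (so that a single finite-measure constant works for all frequency blocks), and that summing the resulting inequality over $\bsm$ does not destroy the weight structure. This is where the hypothesis $\supp(f)\subset[0,1]^d$ together with the compact support of the $\Psi_{\bsm}$ is essential, and it is the only genuinely non-formal point; the remainder of the lemma is bookkeeping with $\ell_\theta$-inclusions and a citation to the F-space characterization of $\Hsp$.
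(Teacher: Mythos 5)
Your part~(i) is sound and is essentially what lies behind the paper's proof, which consists entirely of a citation to \cite[Chapt.~2]{ScTr87}: the $\ell_{\theta_1}\hookrightarrow\ell_{\theta_2}$ monotonicity, the identification $\Hsp=\mathbf{F}^s_{p,2}$, and the standard $\mathbf{B}$--$\mathbf{F}$ sandwich are exactly the ingredients of that reference.

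Part~(ii), however, has a genuine gap, and the order of operations you prescribe cannot be repaired. The blockwise finite-measure estimate $\|\Psi_{\bsm}\ast f\|_{2}\le C\,\|\Psi_{\bsm}\ast f\|_{p}$ (valid, since every $\Psi_{\bsm}\ast f$ is supported in $[-1,2]^d$) preserves the \emph{outer} summation index: starting from $\mathbf{B}^s_{p,p}$ it lands you in $\mr{\mathbf{B}}^s_{2,p}$, not in $\mr{\mathbf{B}}^s_{2,2}$, and to finish you would need $\ell_p\hookrightarrow\ell_2$, which fails for $p>2$. Worse, the composite claim you make --- that compactly supported members of $\mathbf{B}^s_{p,p}$ lie in $\mathbf{B}^s_{2,2}$ --- is simply false. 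In $d=1$, take a fixed $\varphi\in C_0^\infty(\R)$ with $\supp(\varphi)\subset[0,1]$ and set $f=\sum_{m\ge1}c_m\, e_{2^m}\varphi$ with $2^{sm}|c_m|=(1+m)^{-\beta}$ for some $1/p<\beta<1/2$; since $\F\varphi$ decays rapidly, $\|\Psi_m\ast f\|_q\asymp|c_m|$ for every $q$, so $\sum_m\bigl(2^{sm}|c_m|\bigr)^p<\infty$ gives $f\in\mr{\mathbf{B}}^s_{p,p}$ while $\sum_m\bigl(2^{sm}|c_m|\bigr)^2=\infty$ gives $f\notin\mr{\mathbf{B}}^s_{2,2}$. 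The information you need is destroyed the moment you pass to $\theta=p$. The correct order is the reverse: lower $p$ \emph{while the fine index is still} $2$. Using the characterization $\Hsp=\mathbf{F}^s_{p,2}$ that you already invoked for~(i), observe that for $f\in\Ho$ the square function $Sf=\bigl(\sum_{\bsm}4^{s|\bsm|_1}|\Psi_{\bsm}\ast f|^2\bigr)^{1/2}$ is a single scalar function supported in $[-1,2]^d$, so the finite-measure embedding applied once to $Sf$ yields $\|f\|_{\mathbf{F}^s_{2,2}}\le 3^{d(1/2-1/p)}\,\|f\|_{\mathbf{F}^s_{p,2}}\lesssim\|f\|_{\Hsp}$, and $\mathbf{F}^s_{2,2}=\mathbf{B}^s_{2,2}=\Hs$; no interchange of sum and integral is needed. (For integer $s$ one can argue even more simply: $\|D^{\bsell}f\|_2\le\|D^{\bsell}f\|_p$ for each $\bsell$, since $D^{\bsell}f$ is supported in $[0,1]^d$.) This is precisely the role of the compact-support hypothesis that the paper's proof note alludes to.
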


\medskip
\begin{proof} 
For a proof we refer to \cite[Chapt.\ 2]{ScTr87}. 
Note that for {\it (ii)} the compact support of the functions is necessary to 
ensure the corresponding embeddings of the $L_p$ spaces.\\
\end{proof}

In the sequel we will always assume that $s> 1/p$. This ensures that the
functions in $\Bspt$ and $\Hsp$, respectively, are continuous, see
\cite[Chapt.\ 2]{ScTr87}.

\subsection{Useful lemmas}

Here we collect some lemmas that will be essential for our analysis. 
For proofs and further literature for these results we refer 
to \cite[Sections~3.2~\&~3.3]{UU16}.

The first lemma is a variant of Poisson's summation formula, 
see~\cite[Thm.~VII.2.4]{SW71}. 
Although this equality looks more technical than the original summation 
formula, this variant is exactly in the form we need and it comes with 
less assumptions on the involved functions.

\begin{lemma}[Poisson summation formula]\label{lem:poisson}
Let $f\in L_1(\R^d)$ be continuous with compact support, 
$T\in\R^{d\times d}$ be an invertible matrix, and $B=(T^{-1})^\top$.
Furthermore, let $\phi_0\in C_0^\infty(\R)$ with $\phi_0(0)=1$ and define 
$\phi_j(t):=\phi_0(2^{-j}t)-\phi_0(2^{-j+1}t)$, $j\in\N$, $t\in\R$, as well as 
the (tensorized) functions $\phi_{\bsm}(\bsx):=\phi_{m_1}(x_1)\cdots\phi_{m_d}(x_d)$,
$\bsm\in\N_0^d$, $\bsx\in\R^d$. Then
\[
|\det(T)|\sum_{\bsell\in \Z^d} f(T\bsell) 
\,=\, \lim_{M\to\infty}\;\sum_{\bsm\colon|\bsm|_\infty\le M}\; \sum_{\bsk\in\Z^d} 
\phi_{\bsm}(B\bsk)\, \F f(B \bsk).
\]
In particular, the limit on the right hand side exists.
\end{lemma}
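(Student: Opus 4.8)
The plan is to derive this variant of the Poisson summation formula in two stages: first establish a smooth (regularized) version, then pass to the limit using the compact support of $f$. Because $f$ is continuous with compact support, the sums $\sum_{\bsell} f(T\bsell)$ on the left are genuinely finite sums, so there is no convergence issue there; the delicate object is the double sum on the right, which is why the identity is phrased with an outer limit over $M$.

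First I would recall the classical Poisson summation formula in the form
\[
|\det(T)|\sum_{\bsell\in\Z^d} g(T\bsell)
\,=\, \sum_{\bsk\in\Z^d} \F g(B\bsk),
\qquad B=(T^{-1})^\top,
\]
valid for sufficiently nice $g$ (for instance $g\in\S(\R^d)$, or more generally $g$ satisfying mild decay on both the spatial and frequency sides). The role of the functions $\phi_{\bsm}$ is to furnish a smooth dyadic decomposition of unity on the frequency side: since $\phi_0(0)=1$ and $\phi_j(t)=\phi_0(2^{-j}t)-\phi_0(2^{-j+1}t)$ telescopes, one has $\sum_{j=0}^{M} \phi_j(t)=\phi_0(2^{-M}t)$, which tends to $\phi_0(0)=1$ as $M\to\infty$ for every fixed $t$. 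Tensorizing, $\sum_{|\bsm|_\infty\le M}\phi_{\bsm}(\bsxi)=\prod_{i=1}^d \phi_0(2^{-M}\xi_i)\to 1$ pointwise. Thus the inner factor $\phi_{\bsm}(B\bsk)$ is precisely a smooth cutoff that, after summation over $\bsm$ with $|\bsm|_\infty\le M$, multiplies $\F f(B\bsk)$ by a factor approaching $1$.

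The key steps are then as follows. For fixed $M$, the truncated sum over $\bsm$ collapses by the telescoping identity, so the right-hand side at level $M$ equals $\sum_{\bsk\in\Z^d}\Phi_M(B\bsk)\,\F f(B\bsk)$, where $\Phi_M(\bsxi)=\prod_{i=1}^d\phi_0(2^{-M}\xi_i)$ is a smooth compactly supported (in fact, rapidly decaying if $\phi_0\in C_0^\infty$) cutoff. I would then interpret $\Phi_M(\bsxi)\F f(\bsxi)$ as the Fourier transform of $(\F^{-1}\Phi_M)\ast f$: multiplication on the frequency side is convolution on the spatial side. Since $\F^{-1}\Phi_M$ is a Schwartz function (an approximate identity as $M\to\infty$) and $f\in L_1$ has compact support, the convolution $g_M:=(\F^{-1}\Phi_M)\ast f$ is a Schwartz function, hence classical Poisson summation applies to $g_M$ exactly and gives
\[
|\det(T)|\sum_{\bsell\in\Z^d} g_M(T\bsell)
\,=\, \sum_{\bsk\in\Z^d}\Phi_M(B\bsk)\,\F f(B\bsk).
\]
It remains to let $M\to\infty$ on the left: because $f$ is continuous with compact support and $\F^{-1}\Phi_M$ is an approximate identity, $g_M(T\bsell)\to f(T\bsell)$ uniformly, and only finitely many lattice points $T\bsell$ lie in a fixed neighbourhood of $\supp(f)$, so the left-hand sums converge to $|\det(T)|\sum_{\bsell} f(T\bsell)$. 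This yields the asserted identity and simultaneously shows the right-hand limit exists.

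The main obstacle is the justification that the approximate-identity convolution is well-behaved enough to apply classical Poisson summation and to pass to the limit — specifically, controlling $g_M$ and its Fourier transform uniformly in $M$ so that the interchange of summation and limit is legitimate. The compact support of $f$ is what makes this tractable: it confines the left-hand sum to finitely many terms and guarantees $g_M$ inherits good spatial localization, while the smoothness of $\phi_0$ (hence rapid decay of $\F^{-1}\Phi_M$) controls the frequency side. I expect the bulk of the technical work to lie in verifying these uniform bounds and the pointwise convergence $g_M\to f$ on the lattice, rather than in any conceptual step.
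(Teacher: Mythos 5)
Your proposal is correct and is essentially the argument behind the paper's proof: the paper does not prove Lemma~\ref{lem:poisson} itself but defers to \cite[Section~3.3]{UU16}, and the proof given there is exactly your route --- telescope the sum over $|\bsm|_\infty\le M$ to the cutoff $\Phi_M(\bsxi)=\prod_{i=1}^d\phi_0(2^{-M}\xi_i)$, apply the classical Poisson summation formula \cite[Thm.~VII.2.4]{SW71} to the mollification $g_M=(\F^{-1}\Phi_M)\ast f$ (admissible since $\F g_M=\Phi_M\,\F f$ is compactly supported while $g_M$ decays rapidly), and let $M\to\infty$. One caution on wording: since $g_M$ is Schwartz but not compactly supported, the sum $\sum_{\bsell}g_M(T\bsell)$ is \emph{not} a finite sum, so the limit step requires, in addition to the uniform convergence $g_M\to f$ near $\supp(f)$, the tail estimate $\sum_{\bsell\colon T\bsell\notin K}|g_M(T\bsell)|\to 0$ for a fixed neighbourhood $K$ of $\supp(f)$; this does hold, by the scale-$2^{-M}$ concentration of $\F^{-1}\Phi_M$, and is precisely the uniform control you flagged as the remaining technical work.
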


The lemma above holds for quite general choices of $\phi_0$. 
However, we choose a certain $\phi_0$ (and hence $\phi_m$) that is related 
to the definition of our spaces, i.e., the functions $\Psi_0$, $\Psi_1$ from 
Section~\ref{sec:spaces} are involved. 
This is because, given $\Psi_0$ and $\Psi_1$, we can construct a 
suitable \emph{decomposition of unity}.

\begin{lemma}\label{lem:decomp}
Let $\Psi_0, \Psi_1\in\S(\R)$ be functions with
\[
|\F\Psi_0(\xi)|>0\quad \text{ for } |\xi|<\eps
\]
and 
\[
|\F\Psi_1(\xi)|>0\quad \text{ for } \frac\eps2<|\xi|<2\eps
\]
for some $\eps>0$. Then there exist $\Lambda_0, \Lambda_1\in \S(\R)$ such that
\begin{enumerate}[(\it i)]
	\item $\supp\,\F\Lambda_0 \subset \{t\in \R:|t| \leq \eps\}$,
	\item $\supp\,\F\Lambda_1 \subset \{t\in \R:\eps/2 \leq |t| \leq 2\eps\}$, 
	\item $\phi_0:=\F\Lambda_0\cdot \F\Psi_0\in C_0^\infty(\R)$ with $\phi_0(0)=1$
	\item $\phi_j(\cdot):=\phi_0(2^{-j}\, \cdot)-\phi_0(2^{-j+1}\, \cdot)=\F\Lambda_j\cdot \F\Psi_j$, $j\in\N$,
				where $\Psi_j(x)=2^{j-1}\Psi_1(2^{j-1}x)$ and 
				$\Lambda_j(x)=2^{j-1}\Lambda_1(2^{j-1}x)$.
\end{enumerate}
\end{lemma}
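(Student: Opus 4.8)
The plan is to construct the auxiliary functions $\Lambda_0,\Lambda_1$ explicitly by solving on the Fourier side, where the multiplicative relations $\phi_0=\F\Lambda_0\cdot\F\Psi_0$ and $\phi_j=\F\Lambda_j\cdot\F\Psi_j$ become equations that can be inverted on the support of $\F\Psi_0$ and $\F\Psi_1$, respectively. First I would fix a smooth partition-of-unity scheme on $\R$: choose an auxiliary $\eta_0\in C_0^\infty(\R)$ with $\eta_0\equiv1$ on $\{|\xi|\le\eps/2\}$ and $\supp\eta_0\subset\{|\xi|\le\eps\}$, and set $\phi_0:=\eta_0$, so that condition \emph{(iii)} demands $\phi_0(0)=1$, which holds. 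Then define $\phi_j(\xi):=\phi_0(2^{-j}\xi)-\phi_0(2^{-j+1}\xi)$ for $j\in\N$; by telescoping these dilates form a dyadic decomposition of unity, and a direct support computation shows $\supp\phi_1\subset\{\eps/2\le|\xi|\le2\eps\}$, matching the annulus in \emph{(ii)}. The key observation is that on $\supp\phi_0$ we have $|\xi|\le\eps$, where by hypothesis $|\F\Psi_0(\xi)|>0$, and on $\supp\phi_1$ we have $\eps/2\le|\xi|\le2\eps$, where $|\F\Psi_1(\xi)|>0$; so the reciprocals $1/\F\Psi_0$ and $1/\F\Psi_1$ are well-defined and smooth on those support sets.

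The construction then proceeds by setting
\[
\F\Lambda_0(\xi) \,:=\, \frac{\phi_0(\xi)}{\F\Psi_0(\xi)},
\qquad
\F\Lambda_1(\xi) \,:=\, \frac{\phi_1(\xi)}{\F\Psi_1(\xi)},
\]
interpreted as $0$ outside the respective supports of $\phi_0,\phi_1$. Because $\phi_0$ and $\phi_1$ vanish (to all orders, being $C_0^\infty$) near the boundary where $\F\Psi_0$ or $\F\Psi_1$ might lose its lower bound, these quotients extend to genuine $C_0^\infty(\R)$ functions with the required support properties in \emph{(i)} and \emph{(ii)}; applying $\F^{-1}$ yields $\Lambda_0,\Lambda_1\in\S(\R)$, since the Fourier transform maps $C_0^\infty\subset\S$ into $\S$. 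Multiplying back gives $\F\Lambda_0\cdot\F\Psi_0=\phi_0$ and $\F\Lambda_1\cdot\F\Psi_1=\phi_1$ by construction, verifying \emph{(iii)} and the $j=1$ instance of \emph{(iv)}.

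For the general index $j\in\N$ in \emph{(iv)}, I would use the dilation structure. With $\Psi_j(x)=2^{j-1}\Psi_1(2^{j-1}x)$ one computes $\F\Psi_j(\xi)=\F\Psi_1(2^{-(j-1)}\xi)$, and analogously $\F\Lambda_j(\xi)=\F\Lambda_1(2^{-(j-1)}\xi)$ for the dilated $\Lambda_j(x)=2^{j-1}\Lambda_1(2^{j-1}x)$. Hence $\F\Lambda_j(\xi)\cdot\F\Psi_j(\xi)=\phi_1(2^{-(j-1)}\xi)$, and it remains to check that this equals $\phi_j(\xi)=\phi_0(2^{-j}\xi)-\phi_0(2^{-j+1}\xi)$. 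This reduces to the single scaling identity $\phi_1(2^{-(j-1)}\xi)=\phi_0(2^{-j}\xi)-\phi_0(2^{-(j-1)}\xi)$, which is immediate from the defining relation $\phi_1(\eta)=\phi_0(\eta/2)-\phi_0(\eta)$ upon substituting $\eta=2^{-(j-1)}\xi$.

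The main obstacle, and the only point requiring genuine care, is the smoothness of the reciprocals $1/\F\Psi_0$ and $1/\F\Psi_1$ at the edges of the cutoffs: one must ensure that $\phi_0/\F\Psi_0$ and $\phi_1/\F\Psi_1$ are $C^\infty$ across the whole line, not merely on the open sets where the denominators are nonzero. This is handled by choosing the cutoff $\eta_0$ so that $\phi_0$ is supported strictly inside $\{|\xi|<\eps\}$ (and $\phi_1$ strictly inside the open annulus $\{\eps/2<|\xi|<2\eps\}$), leaving a positive gap to the zero set of $\F\Psi_0$ (resp.\ $\F\Psi_1$); on that gap the quotient is identically zero and smoothly matches the interior, so no singularity arises. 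Everything else is a routine verification of supports and the elementary telescoping/dilation identities.
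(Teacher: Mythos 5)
Your proposal is correct and follows essentially the same route as the paper's proof: both construct a dyadic decomposition of unity at scale $\eps$ from a compactly supported cutoff equal to $1$ near the origin, define $\F\Lambda_j$ by dividing $\phi_j$ by $\F\Psi_j$ on the Fourier side, and obtain the case $j\ge 2$ from the dilation identity. The only cosmetic difference is that the paper rescales the specific Vyb\'iral cutoff (with constants $4/3$ and $3/2$), which makes the strict support inclusions---the point you handle explicitly at the end---hold automatically.
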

\smallskip

\begin{proof}
Following \cite[Thm.\ 1.20]{Vyb06}, see also~\cite[Lemma~3.6]{UU16}, 
we use the special dyadic 
decomposition of unity with $\varphi(t) = 1$ if $|t|\leq 4/3$ and 
$\varphi(t) =0$ if $|t|>3/2$. 
Put $\Phi_0:=\F^{-1}\varphi$ and $\Phi_1:=2\Phi_0(2\, \cdot)-\Phi_0$, 
i.e.~$\F\Phi_1=\F\Phi_0(\cdot\, /2)-\F\Phi_0$. 
With $\Phi_j:=2^{j-1}\Phi_1(2^{j-1}\, \cdot)$ for $j\geq 1$ we define $\Lambda_0, \Lambda_1$ through 
$$
    \F\Lambda_j(t) := \frac{\F \Phi_j(2t/\eps)}{\F \Psi_j(t)}\quad \quad \mbox{for $t\in \R$.}
$$    
Then $(i)$ and $(ii)$ follow from the support of $\F\Phi_0$ and $\F\Phi_1$, 
$(iii)$ comes from $\phi_0(0)=\F\Phi_0(0)=\phi(0)=1$, 
and $(iv)$ is easily shown by using the definition of $\Phi_j$, $j\ge1$.
\end{proof}

We define the $d$-fold tensorized functions 
$$
\Lambda_{\bsm}(\bsx) \,:=\, \prod_{i=1}^d \Lambda_{m_i}(x_i)\quad\mbox{and}\quad 
\Psi_{\bsm}(\bsx) \,:=\, \prod_{i=1}^d \Psi_{m_i}(x_i)\quad \quad \mbox{for $\bsx\in \R^d$ and $\bsm\in\N_0^d$,} 
$$
where $\Lambda_j, \Psi_j$, $j\in\N_0$, are defined in Lemma~\ref{lem:decomp}.
We obtain from Lemma~\ref{lem:decomp} that we can use the functions
\begin{equation}\label{eq:decomp}
\phi_{\bsm}(\bsx) \,:=\, \F\Lambda_{\bsm}(\bsx)\cdot \F\Psi_{\bsm}(\bsx) \qquad \mbox{for $\bsx\in\R^d$,}
\end{equation}
in Lemma~\ref{lem:poisson}. 
Moreover, by the construction of the tensorized functions and Lemma~\ref{lem:decomp}, 
we know that the support of $\F\Lambda_{\bsm}$, $\bsm\in\N_0^d$, is of the form 
\begin{equation}\label{eq:dyadic}
I_{\bsm} \,:=\, \supp\,\F\Lambda_{\bsm} \,\subset\, \left\{\bsx\in\R^d\colon 
	\frac12 \lfloor2^{m_i-1}\rfloor\le|x_i|\le2^{m_i},i=1,...,d \right\}.
\end{equation}
For this, recall that we choose $\Psi_0, \Psi_1$ such that $\eps=1$, 
see Remark~\ref{rem:up}.\\

As one can see in the lemmas above we are concerned with sums of certain 
function evaluations of Fourier transforms. 
Finally, we want to bound such sums by the norm of the involved functions, 
i.e., we have to control the dependence on the matrix $B$, see Lemma~\ref{lem:poisson}. 
For this we need the following two lemmas, see~\cite[Lemmas~3.3~\&~3.5]{UU16}.

\begin{lemma}\label{lem:norm1}
Let $B\in\R^{d\times d}$ be an invertible matrix and 
$\Omega\subset\R^d$ be a bounded set. 
Furthermore, let $f\in \S(\R^d)$ with 
$\supp(f)\subset\Omega$ and define
\[
M_{B,\Omega} \,:=\, |\{\bsell\in\Z^d\colon (\bsell+[0,1)^d)\cap B^\top(\Omega)\neq\emptyset\}|.
\] 
Then, for $1\le p\le\infty$, we have
\[
\Big\|\sum_{\bsell\in\Z^d}\F f(B \bsell)\, e_{\bsell}\Big\|_{L_{p}([0,1]^d)}
\,\le\, \Big(\frac{M_{B,\Omega}}{|\det(B)|}\Big)^{1-1/p}\, 
	\|f\|_{p}.
\]
\end{lemma}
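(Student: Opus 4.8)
The plan is to recognize the trigonometric polynomial as (a constant multiple of) the $\Z^d$-periodization of a rescaled copy of $f$, and then to exploit that this periodization has only boundedly many nonzero summands over each point of the torus. First I would remove the matrix from the argument of the Fourier transform. Writing $\ls B\bsell,\bsy\rs=\ls\bsell,B^\top\bsy\rs$ and substituting $\bsz=B^\top\bsy$ (with Jacobian $|\det(B)|^{-1}$) gives
\[
\F f(B\bsell)\,=\,\frac{1}{|\det(B)|}\,\F g(\bsell),\qquad\text{where } g(\bsz):=f\bigl((B^\top)^{-1}\bsz\bigr).
\]
Since $f\in\S(\R^d)$ and $\supp(f)\subset\Omega$, the function $g$ again lies in $\S(\R^d)$ and satisfies $\supp(g)\subset B^\top(\Omega)$, which is bounded.

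Next I would identify the Fourier series with a periodization. A short computation (using $\ls\bsell,\bsm\rs\in\Z$ for $\bsm\in\Z^d$) shows that the numbers $\F g(\bsell)$ are exactly the Fourier coefficients of the periodization $\tilde g(\bsx):=\sum_{\bsm\in\Z^d}g(\bsx+\bsm)$. Because $g$ is smooth with compact support, $\tilde g$ is a $C^\infty$ periodic function and its Fourier series converges to it, so on $[0,1]^d$ we have
\[
\sum_{\bsell\in\Z^d}\F f(B\bsell)\,e_{\bsell}\,=\,\frac{1}{|\det(B)|}\sum_{\bsell\in\Z^d}\F g(\bsell)\,e_{\bsell}\,=\,\frac{1}{|\det(B)|}\,\tilde g.
\]

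The heart of the estimate is the bound on $\|\tilde g\|_{L_p([0,1]^d)}$. For fixed $\bsx\in[0,1)^d$ the summand $g(\bsx+\bsm)$ can be nonzero only if $(\bsm+[0,1)^d)\cap B^\top(\Omega)\neq\emptyset$, so at most $M_{B,\Omega}$ terms survive for each such $\bsx$. For $1\le p<\infty$, Hölder's inequality applied to a sum with at most $M_{B,\Omega}$ nonzero entries yields $|\tilde g(\bsx)|^p\le M_{B,\Omega}^{\,p-1}\sum_{\bsm}|g(\bsx+\bsm)|^p$; integrating over $[0,1]^d$ and unfolding the periodization gives
\[
\|\tilde g\|_{L_p([0,1]^d)}^p\,\le\,M_{B,\Omega}^{\,p-1}\int_{\R^d}|g(\bsy)|^p\rd\bsy\,=\,M_{B,\Omega}^{\,p-1}\,\|g\|_p^p.
\]
The same change of variables gives $\|g\|_p^p=|\det(B)|\,\|f\|_p^p$, and combining the three displays produces the factor $(M_{B,\Omega}/|\det(B)|)^{1-1/p}$ as claimed. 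The case $p=\infty$ follows from the analogous simpler bound $|\tilde g(\bsx)|\le M_{B,\Omega}\,\|g\|_\infty=M_{B,\Omega}\,\|f\|_\infty$ (no Jacobian appears in the sup-norm).

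I expect the only genuinely delicate point to be the rigorous justification of the periodization identity, namely that the formal series $\sum_{\bsell}\F g(\bsell)\,e_{\bsell}$ truly represents $\tilde g$ on $[0,1]^d$. This is clean here precisely because $g$ inherits both smoothness and compact support from $f$: the periodization $\tilde g$ is then a smooth periodic function whose Fourier series converges absolutely and uniformly, so the interchange of summation and integration used above is fully legitimate.
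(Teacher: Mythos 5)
Your proof is correct, and it follows essentially the same route as the proof the paper relies on: for this lemma the paper gives no argument of its own but cites \cite{UU16} (Lemma~3.3 there), whose proof is exactly your reduction to the periodization of $g=f\circ(B^\top)^{-1}$ via the substitution $\F f(B\bsell)=|\det(B)|^{-1}\F g(\bsell)$ and Poisson summation, followed by the H\"older/counting estimate using that at most $M_{B,\Omega}$ summands of the periodization are nonzero. The delicate point you flag --- the validity of the Fourier-series representation of the periodization --- is likewise settled there by the smoothness and compact support of $g$, just as you argue.
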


\begin{remark}\label{rem:covering}\rm
Note that $M_{B,\Omega}$ is the number of unit cubes in the standard tessellation 
of $\R^d$ that are necessary to 
cover the set $B^\top(\Omega)$, while $|\det(B)|$ equals the volume of 
$B^\top([0,1]^d)$. 
%
\end{remark}

\begin{lemma}\label{lem:norm2}
Let $B\in\R^{d\times d}$ be an invertible matrix and 
$\Omega\in\R^d$ be a bounded set. 
Furthermore, let $g\in\S(\R^d)$ with 
$\supp(\F g)\subset\Omega$.
Then, for $1\le p\le\infty$, we have
\[
\Big\|\sum_{\bsell\in\Z^d} \F g(B \bsell)\, e_{\bsell} \Big\|_{L_{p}([0,1]^d)}
\,\le\, |B(\Z^d)\cap\Omega|^{1-1/p}\, \|g\|_1.
\]
\end{lemma}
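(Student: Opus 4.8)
The plan is to prove Lemma~\ref{lem:norm2} by reduction to Lemma~\ref{lem:norm1}. The key observation is that the two lemmas are dual to each other under the Fourier transform, so I would apply Lemma~\ref{lem:norm1} not to $g$ itself but to its Fourier transform $\F g$.

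First I would set $f := \F g$. By hypothesis $\supp(\F g) = \supp(f) \subset \Omega$, which is exactly the support condition required in Lemma~\ref{lem:norm1}. Then I would apply Lemma~\ref{lem:norm1} with the same matrix $B$ and the same bounded set $\Omega$, using the elementary identity $\F(\F g) = g(-\cdot)$ (a consequence of the Fourier inversion formula on $\S(\R^d)$). This gives $\F f = \F(\F g) = g(-\cdot)$, so that $\F f(B\bsell) = g(-B\bsell)$. The left-hand side of Lemma~\ref{lem:norm1} applied to $f$ becomes $\bigl\| \sum_{\bsell\in\Z^d} \F g(B\bsell)\, e_{\bsell} \bigr\|_{L_p([0,1]^d)}$, which is precisely the left-hand side of the statement I want to prove.

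Second, I would rewrite the right-hand side bound. Lemma~\ref{lem:norm1} gives an upper bound of $\bigl(M_{B,\Omega}/|\det(B)|\bigr)^{1-1/p}\,\|f\|_p$ with $f=\F g$. Here I would choose $p=\infty$ inside the invocation to turn the $L_p$-norm of $f$ into an $L_1$-type quantity; more carefully, the natural route is to apply Lemma~\ref{lem:norm1} with the roles arranged so that the exponent $1-1/p$ attaches to the counting quantity while $\|g\|_1$ appears on the right. The counting factor $M_{B,\Omega}/|\det(B)|$ should then be matched to the cardinality $|B(\Z^d)\cap\Omega|$ by a geometric argument: by Remark~\ref{rem:covering}, $M_{B,\Omega}$ counts unit cubes covering $B^\top(\Omega)$ and $|\det(B)|$ is the covolume of the lattice $B(\Z^d)$, and a standard packing/covering estimate relates this ratio to the number of dual lattice points $|B(\Z^d)\cap\Omega|$ falling in $\Omega$.

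I expect the main obstacle to be precisely this final geometric identification: reconciling the covering count $M_{B,\Omega}/|\det(B)|$ from Lemma~\ref{lem:norm1} with the clean lattice-point count $|B(\Z^d)\cap\Omega|$ appearing in the target statement. The cleanest path is likely to prove the estimate directly by a short Fourier-duality argument rather than through the covering constant, namely by expanding the periodic function $\sum_{\bsell} \F g(B\bsell)\, e_{\bsell}$ as a Fourier series whose coefficients are lattice samples of $g$, applying the trivial $L_\infty \le L_1$ and $L_1 \le |\text{support}|\cdot L_\infty$ bounds, and using $\supp(\F g)\subset\Omega$ to restrict the nonzero frequencies to $B(\Z^d)\cap\Omega$. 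Interpolating between the $p=1$ and $p=\infty$ endpoints then yields the factor $|B(\Z^d)\cap\Omega|^{1-1/p}$ for general $1\le p\le\infty$.
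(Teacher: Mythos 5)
Your primary route---reducing to Lemma~\ref{lem:norm1} via $f:=\F g$---fails at both of the points where you hedge, and the failures are fatal rather than technical. First, the left-hand sides do not match: as your own computation shows, $\F f(B\bsell)=g(-B\bsell)$, so Lemma~\ref{lem:norm1} applied to $f=\F g$ controls $\bigl\|\sum_{\bsell}g(-B\bsell)\,e_{\bsell}\bigr\|_{L_p([0,1]^d)}$, a sum built from samples of $g$, not the sum $\sum_{\bsell}\F g(B\bsell)\,e_{\bsell}$ in the statement; the sentence asserting they coincide contradicts the line preceding it ($\F\circ\F$ is reflection, it does not return you to $\F g$). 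Second, the hoped-for ``geometric identification'' of the two counting quantities cannot exist: $M_{B,\Omega}/|\det(B)|\ge \mathrm{vol}(B^\top(\Omega))/|\det(B)|=\mathrm{vol}(\Omega)$ is a volume-type quantity, whereas $|B(\Z^d)\cap\Omega|$ is a lattice-point count that can be $0$. Indeed, in the paper's application $\Omega=I_{\bsm}$ with $|\bsm|_1<\log\rho(\X)$ gives $|B(\Z^d)\cap I_{\bsm}\setminus\{\bszero\}|=0$ while the covering number stays of volume order; this discrepancy is precisely what Lemma~\ref{lem:points} and Theorem~\ref{thm:general} exploit. So no packing/covering argument yields $M_{B,\Omega}/|\det(B)|\lesssim|B(\Z^d)\cap\Omega|$; the inequality goes the wrong way, and any lemma proved through the covering constant would be too weak to produce the Zaremba-index bound.

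Your fallback ``direct'' argument is the correct approach---it is essentially the proof in \cite[Lemma~3.5]{UU16}, to which the paper defers instead of giving its own proof---but as sketched it only establishes the $p=\infty$ endpoint, where $\supp(\F g)\subset\Omega$ leaves at most $|B(\Z^d)\cap\Omega|$ nonzero coefficients, each bounded by $\|\F g\|_\infty\le\|g\|_1$. The $p=1$ endpoint asserts $\bigl\|\sum_{\bsell}\F g(B\bsell)\,e_{\bsell}\bigr\|_{L_1([0,1]^d)}\le\|g\|_1$ with \emph{no} counting factor, and this does not follow from the ``trivial'' bounds you invoke: those only give $\|F\|_{L_1([0,1]^d)}\le\|F\|_{\infty}\le|B(\Z^d)\cap\Omega|\,\|g\|_1$, which is off by the full counting factor and destroys the interpolation. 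The missing idea is periodization via Poisson summation: setting $h:=|\det(B)|^{-1}\,g(B^{-\top}\cdot)$, a change of variables gives $\F h(\bsell)=\F g(B\bsell)$ and $\|h\|_1=\|g\|_1$, hence
\[
\sum_{\bsell\in\Z^d}\F g(B\bsell)\,e_{\bsell}(\bsx)\,=\,\sum_{\bsk\in\Z^d}h(\bsx+\bsk),
\]
so the $L_1([0,1]^d)$-norm is at most $\|h\|_{L_1(\R^d)}=\|g\|_1$ by the triangle inequality. (This identity, not a ``Fourier series whose coefficients are lattice samples of $g$,'' is the duality you were reaching for.) With both endpoints in place, $\|F\|_p\le\|F\|_1^{1/p}\|F\|_\infty^{1-1/p}$ yields the stated bound for all $1\le p\le\infty$.
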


\section{General error bound}\label{sec:genbd}
In this section we provide a general upper bound on the error of general lattice 
rules for the integration in the spaces $\Bo$ of functions with support 
inside the unit cube. 
Here we mean by \emph{general lattice rules} all algorithms of the form
\begin{equation}\label{eq:latticerule}
Q_{T}(f) \,=\, |\det(T)| \sum_{\bsx\in T(\Z^d)\cap[0,1)^d} f(\bsx)
\end{equation}
with an invertible matrix $T\in\R^{d\times d}$. 
Clearly, due to $\supp(f)\subset[0,1]^d$, we could also sum over all 
$\bsx\in T(\Z^d)$ without changing the value of $Q_T(f)$.

For an invertible matrix $T$ the dual lattice of the lattice $\X=T(\Z^d)$ is defined as $\X^*=B(\Z^d):=T^{-\top}(\Z^d)$.

\begin{remark}\label{rem:latticerule} \rm
In the literature usually only those algorithms of the 
form~\eqref{eq:latticerule} are called lattice rule that satisfy 
$T(\Z^d)\supset\Z^d$, see e.g.~\cite{niesiam,sloejoe}. Such rules are a priori 
also suited for the integration of periodic functions, in contrast to 
the general algorithms of the form~\eqref{eq:latticerule}.
We decided to use this nomenclature with the prefix ``general'' since it seems to be adequate for cubature 
rules that use function evaluations on a lattice.
\end{remark}

We prove the following theorem.

\begin{theorem} \label{thm:general}
Let $T\in\R^{d\times d}$ be invertible, $\X=T(\Z^d)$, $\X^*$ its dual lattice, $I_{\bsm}$ from~\eqref{eq:dyadic} and $Q_T$ as in~\eqref{eq:latticerule}.
Then, for $1\le p,\theta\le\infty$ and $s>1/p$, 
we have
\[
{\rm wce}(Q_T, \Bo) \,\lesssim\, \left(\sum_{\bsm\in\N_0^d} 
	2^{-s |\bsm|_1 \theta'}\,|\X^*\cap I_{\bsm}\setminus\{\bszero\}|^{\theta'/p}\right)^{1/\theta'},
\]
with $\theta'=\theta/(\theta-1)$.
The hidden constant only depends on the quantity $M_{B,[-1,2]^d}/|\det(B)|$, 
cf. Remark~\ref{rem:covering}.
\end{theorem}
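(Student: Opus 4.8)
The plan is to express the integration error by means of the Poisson summation formula (Lemma~\ref{lem:poisson}), using the specific decomposition of unity $\phi_{\bsm}=\F\Lambda_{\bsm}\cdot\F\Psi_{\bsm}$ from~\eqref{eq:decomp}, and then to estimate each dyadic block separately by combining H\"older's inequality with Lemmas~\ref{lem:norm1} and~\ref{lem:norm2}. Since $s>1/p$ ensures that every $f\in\Bo$ is continuous with $\supp(f)\subset[0,1]^d$, the point evaluations in $Q_T$ make sense and we may write $Q_T(f)=|\det(T)|\sum_{\bsell\in\Z^d}f(T\bsell)$. Applying Lemma~\ref{lem:poisson} with $B=(T^{-1})^\top$ gives
\[
Q_T(f)=\lim_{M\to\infty}\sum_{|\bsm|_\infty\le M}\sum_{\bsk\in\Z^d}\phi_{\bsm}(B\bsk)\,\F f(B\bsk).
\]
The terms with $\bsk=\bszero$ contribute $\bigl(\sum_{\bsm}\phi_{\bsm}(\bszero)\bigr)\F f(\bszero)$; since $\{\phi_{\bsm}\}$ is a decomposition of unity we have $\sum_{\bsm}\phi_{\bsm}(\bszero)=1$, and $\F f(\bszero)=\int_{[0,1]^d}f=I_d(f)$, so these terms reconstruct $I_d(f)$. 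Hence $I_d(f)-Q_T(f)=-\lim_M\sum_{|\bsm|_\infty\le M}\sum_{\bsk\ne\bszero}\phi_{\bsm}(B\bsk)\F f(B\bsk)$, and the triangle inequality yields $|I_d(f)-Q_T(f)|\le\sum_{\bsm}|E_{\bsm}(f)|$ with $E_{\bsm}(f):=\sum_{\bsk\ne\bszero}\phi_{\bsm}(B\bsk)\,\F f(B\bsk)$.

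Next I would estimate a single block $E_{\bsm}(f)$. Writing $\phi_{\bsm}=\F\Lambda_{\bsm}\cdot\F\Psi_{\bsm}$ and using $\F\Psi_{\bsm}\cdot\F f=\F(\Psi_{\bsm}\ast f)$, the block becomes $\sum_{\bsk\ne\bszero}\F\Lambda_{\bsm}(B\bsk)\,\F(\Psi_{\bsm}\ast f)(B\bsk)$. I recognise this as a Parseval pairing on the torus of the two periodic functions whose Fourier coefficients are $\F\Lambda_{\bsm}(B\bsk)$ (for $\bsk\ne\bszero$) and $\F(\Psi_{\bsm}\ast f)(B\bsk)$; the conjugations and reflections $\bsk\mapsto-\bsk$ involved leave the relevant $L_p$-norms unchanged. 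H\"older's inequality with $1/p+1/p'=1$ then gives $|E_{\bsm}(f)|\le\|U_{\bsm}\|_{L_{p'}([0,1]^d)}\|V_{\bsm}\|_{L_p([0,1]^d)}$, where $U_{\bsm}=\sum_{\bsk\ne\bszero}\F\Lambda_{\bsm}(B\bsk)e_{\bsk}$ and $V_{\bsm}=\sum_{\bsk}\F(\Psi_{\bsm}\ast f)(B\bsk)e_{\bsk}$. Since $\supp\F\Lambda_{\bsm}=I_{\bsm}$ and the term $\bsk=\bszero$ is omitted, only $\bsk$ with $B\bsk\in\X^*\cap I_{\bsm}\setminus\{\bszero\}$ survive in $U_{\bsm}$, so Lemma~\ref{lem:norm2} (with $g=\Lambda_{\bsm}$) yields $\|U_{\bsm}\|_{L_{p'}}\le|\X^*\cap I_{\bsm}\setminus\{\bszero\}|^{1/p}\,\|\Lambda_{\bsm}\|_1$, and the dilation structure $\Lambda_j(x)=2^{j-1}\Lambda_1(2^{j-1}x)$ makes $\|\Lambda_{\bsm}\|_1$ bounded by a constant depending only on $d$ and the decomposition. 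For $V_{\bsm}$, note $\supp(\Psi_{\bsm}\ast f)\subset[-1,1]^d+[0,1]^d=[-1,2]^d$, so Lemma~\ref{lem:norm1} with $\Omega=[-1,2]^d$ gives $\|V_{\bsm}\|_{L_p}\le(M_{B,[-1,2]^d}/|\det(B)|)^{1/p'}\|\Psi_{\bsm}\ast f\|_p$. Thus $|E_{\bsm}(f)|\lesssim|\X^*\cap I_{\bsm}\setminus\{\bszero\}|^{1/p}\|\Psi_{\bsm}\ast f\|_p$, the hidden constant being controlled by $M_{B,[-1,2]^d}/|\det(B)|$.

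Finally I would sum over $\bsm$, insert the compensating factors $2^{\pm s|\bsm|_1}$, and apply H\"older's inequality in the index $\bsm$ with exponents $\theta',\theta$:
\[
\sum_{\bsm}|E_{\bsm}(f)|\lesssim\Big(\sum_{\bsm}2^{-s|\bsm|_1\theta'}|\X^*\cap I_{\bsm}\setminus\{\bszero\}|^{\theta'/p}\Big)^{1/\theta'}\Big(\sum_{\bsm}2^{s|\bsm|_1\theta}\|\Psi_{\bsm}\ast f\|_p^\theta\Big)^{1/\theta}.
\]
The second factor equals $\|f\|_{\Bspt}\le1$ by Definition~\ref{def:besov}, and taking the supremum over the unit ball gives the claimed estimate. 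I expect the crux of the argument to be the middle step: recasting the discrete double sum $E_{\bsm}(f)$ as an integral pairing so that the two norm lemmas can be invoked, and keeping careful track of the omission of $\bsk=\bszero$ so that it enters precisely as the cardinality $|\X^*\cap I_{\bsm}\setminus\{\bszero\}|$ rather than $|\X^*\cap I_{\bsm}|$ (a distinction that matters only for $\bsm=\bszero$, since $\bszero\notin I_{\bsm}$ otherwise). The endpoint cases $p,\theta\in\{1,\infty\}$ require the usual modifications of H\"older's inequality.
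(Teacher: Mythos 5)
Your proposal is correct and follows essentially the same route as the paper's proof: Poisson summation with the decomposition $\phi_{\bsm}=\F\Lambda_{\bsm}\cdot\F\Psi_{\bsm}$, recasting each dyadic block as an $L_{p'}$--$L_p$ pairing on $[0,1]^d$ that is estimated via Lemmas~\ref{lem:norm1} and~\ref{lem:norm2}, and a final H\"older step in $\bsm$ against the Besov norm. The only (harmless) deviations are that you retain the $\bsk=\bszero$ term in $V_{\bsm}$ and that you explicitly address the conjugation in the Parseval pairing and the special role of $\bsm=\bszero$, points the paper passes over silently.
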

\medskip

\begin{proof}
From Lemma~\ref{lem:poisson} with $\phi_{\bsm}$ from~\eqref{eq:decomp} we have 
\[
Q_T(f)
\,=\, \sum_{\bsm\in\N_0^d}\; \sum_{\bsk\in\Z^d} 
\F[\Lambda_{\bsm}\ast\Psi_{\bsm}](B \bsk)\, \F f(B \bsk),
\]
where $B=T^{-\top}$. 
Actually, the outer sum is defined 
as a certain limit,  
however, we will see that this sum converges absolutely, which justifies this notation. 

Using that $\ls e_{\bsk}, e_{\bsell} \rs_{L_2([0,1]^d)}=1$, if $\bsk=\bsell$, 
and 0 otherwise, where $\ls\cdot,\cdot\rs_{L_2([0,1]^d)}$ is the usual inner product in $L_2([0,1]^d)$, 
and $I(f)=\F f(\bszero)$ we obtain
\[
\begin{split}
|I(f)-Q_T(f)| 
\,&=\, \abs{\sum_{\bsm\in\N_0^d}\,\sum_{\bsk\neq \bszero} \F\Lambda_{\bsm}(B \bsk)\, \F\Psi_{\bsm}(B \bsk)\, \F f(B \bsk)}\\
&=\,\abs{\sum_{\bsm\in\N_0^d}\,\sum_{\bsk\neq \bszero}\,\sum_{\bsell\neq \bszero} \F\Lambda_{\bsm}(B \bsk)\, 
	\F[\Psi_{\bsm}\ast f](B \bsell)\; \langle e_{\bsk}, e_{\bsell}\rangle_{L_2([0,1]^d)}} \\
&=\,\abs{\sum_{\bsm\in\N_0^d} \lsb\sum_{\bsk\neq \bszero} \F\Lambda_{\bsm}(B \bsk)\, e_{\bsk},
	\sum_{\bsell\neq \bszero}\F[\Psi_{\bsm}\ast f](B \bsell)\, e_{\bsell} \rsb_{L_2([0,1]^d)}}.\\
\end{split}
\]
By H\"older's inequality as well as Lemma~\ref{lem:norm1} and Lemma~\ref{lem:norm2} 
we have
\[
\begin{split}
|I(f)-Q_T(f)| \,&\le\, \sum_{\bsm\in\N_0^d}\, 
	\Big\|\sum_{k\neq \bszero} \F\Lambda_{\bsm}(B \bsk)\, e_{\bsk}\Big\|_{L_{p'}([0,1]^d)}\, 
	\Big\|\sum_{\bsell\neq \bszero}\F[\Psi_{\bsm}\ast f](B \bsell)\, e_{\bsell} \Big\|_{L_{p}([0,1]^d)}\\
\,&\le\, \sum_{\bsm\in\N_0^d}\, |\X^*\cap I_{\bsm}\setminus\{\bszero\}|^{1-1/{p'}} 
		\|\Lambda_{\bsm}\|_1 \,\Big(\frac{M_{B,\supp(\Psi_{\bsm}\ast f)}}{|\det(B)|}\Big)^{1-1/p}\,
		\|\Psi_{\bsm}\ast f\|_p
\end{split}
\]
with $1/p+1/p'=1$.
For this note that, by construction, $\F\Lambda_{\bsm}$ and $\Psi_{\bsm}\ast f$ have 
compact support. 
Further $\supp(\Psi_{\bsm}\ast f) \subset [-1,2]^d$ holds for all $\bsm$, see Remark~\ref{rem:up}.
Since, again by construction, $\|\Lambda_{\bsm}\|_1\lesssim1$, we finally obtain
\[
\begin{split}
|I(f)-Q_T(f)| \,&\lesssim\, \left(\sum_{\bsm\in\N_0^d}\, 2^{-s|\bsm|_1\theta'} 
	|\X^*\cap I_{\bsm}\setminus\{\bszero\}|^{\theta'/{p}} \right)^{1/\theta'}\,
	\|f\|_{\Bo}
\end{split}
\]
with $1/\theta+1/\theta'=1$, see Definition~\ref{def:besov}. \\
\end{proof}

This shows that, in order to prove bounds on the worst-case error, it is 
enough to study the numbers $|\X^*\cap I_{\bsm}\setminus\{\bszero\}|$ for $\bsm\in\N_0^d$. 
Moreover, as the next lemma shows, this quantity can be bounded by the 
Zaremba index of the lattice $\X$, which is, loosely speaking, the 
largest $\ell\in\N$ such that $|\X^*\cap I_{\bsm}\setminus\{\bszero\}|=0$ for all 
$\bsm$ with $|\bsm|_1\le\ell$.
More precisely, we define the \emph{Zaremba index} of the lattice $\X$ by
\begin{equation}\label{eq:Zaremba}
\rho(\X) \,:=\, \inf_{\bsz\in\X^*\setminus\{\bszero\}}\, r(\bsz), 
\end{equation}
where, for $\bsz=(z_1,\ldots,z_d)$, $r(\bsz):=\prod_{j=1}^d \ol{z}_j$ with $\ol{z}:=\max\{1,|z|\}$.
The connection of the Zaremba index to numerical integration is well-known, 
see e.g.~\cite{niesiam,sloejoe} and the references therein, but we repeat the (relatively short) proofs here 
for convenience of the reader.

\begin{lemma}\label{lem:points}
Let $T\in\R^{d\times d}$ be invertible, $\X=T(\Z^d)$, $\X^*$ its dual lattice and $I_{\bsm}$ from~\eqref{eq:dyadic}. Then, for all $\bsm\in\N_0^d$, we have
\[
|\X^*\cap I_{\bsm}\setminus\{\bszero\}| \,\lesssim\, \begin{cases}
0 & \text{ if }\; |\bsm|_1< \log\bigl(\rho(\X)\bigr), \\
2^{|\bsm|_1}/\rho(\X) 
& \text{ otherwise,}
\end{cases}
\]
where $\log$ denotes the dyadic logarithm.
\end{lemma}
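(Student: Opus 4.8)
The plan is to work with the containment $I_{\bsm}\subset B_{\bsm}:=\prod_{i=1}^d[-2^{m_i},2^{m_i}]$ coming from \eqref{eq:dyadic}, so that it suffices to control the number of nonzero dual-lattice points in the box $B_{\bsm}$, and then to treat the two cases separately. The first case is essentially immediate: for every $\bsz=(z_1,\dots,z_d)\in I_{\bsm}$ we have $|z_i|\le 2^{m_i}$, hence $\ol{z}_i=\max\{1,|z_i|\}\le 2^{m_i}$ and therefore $r(\bsz)=\prod_i\ol{z}_i\le 2^{|\bsm|_1}$. By \eqref{eq:Zaremba} every nonzero $\bsz\in\X^*$ satisfies $r(\bsz)\ge\rho(\X)$, so a nonzero point of $\X^*$ can lie in $I_{\bsm}$ only if $\rho(\X)\le 2^{|\bsm|_1}$, i.e.\ $|\bsm|_1\ge\log\rho(\X)$. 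This yields the first case.

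For the second case I would bound $|\X^*\cap B_{\bsm}\setminus\{\bszero\}|$ by induction on the dimension $d$, carrying the following separation form of the hypothesis: if $S\subset B_{\bsm}$ is finite and all nonzero pairwise differences $\boldsymbol{w}$ satisfy $r(\boldsymbol{w})\ge\rho$, then $|S|\lesssim 2^{|\bsm|_1}/\rho$ whenever $2^{|\bsm|_1}\ge\rho$. The set $S=\X^*\cap B_{\bsm}$ has this property since differences of lattice points again lie in $\X^*$. For $d=1$ the condition $r(\boldsymbol{w})\ge\rho$ forces $|\boldsymbol{w}|\ge\rho$ (recall $\rho\ge1$ always, as $r\ge1$), so the points of $S$ are $\rho$-separated in an interval of length $2^{m_1+1}$ and $|S|\le 2^{m_1+1}/\rho+1\lesssim 2^{m_1}/\rho$.

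For the inductive step I would project along the first coordinate, $\pi(\bsz)=(z_2,\dots,z_d)$, and decompose $S$ into the fibers of $\pi$. Two points in the same fiber differ by a vector $(w_1,0,\dots,0)$ of $r$-value $\ol{w}_1\ge\rho$, so each fiber sits in a progression of gap $\ge\rho$ inside an interval of length $2^{m_1+1}$ and thus has at most $2^{m_1+1}/\rho+1$ elements. The projected set $\pi(S)\subset B_{(m_2,\dots,m_d)}$ inherits a $(d-1)$-dimensional separation: for distinct projected points, choosing a difference $\boldsymbol{w}$ with minimal $|w_1|$ gives $r(\pi(\boldsymbol{w}))=r(\boldsymbol{w})/\ol{w}_1\ge\rho/\ol{w}_1$, where $\ol{w}_1$ is controlled both by the box width $2^{m_1+1}$ and by the generator $g$ of the one-dimensional sublattice $\X^*\cap(\R\times\{0\}^{d-1})$, since the minimal representative satisfies $|w_1|\le g/2$. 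Applying the inductive hypothesis to $\pi(S)$ with this reduced separation and multiplying by the fiber bound is the heart of the matter: the factor $\ol{w}_1\approx g$ entering the projected count cancels exactly against the $1/g$ in the fiber size, and the two estimates combine to $\lesssim 2^{|\bsm|_1}/\rho$.

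The step I expect to be the main obstacle is precisely this cancellation. A crude estimate of the form (number of fibers)$\times$(maximal fiber size) loses a factor $2^{m_1}/\rho$, because it effectively exploits the separation strength $\rho$ twice. The remedy is to track the genuine axis-generator $g$ and to use the sharper separation $\rho/\ol{w}_1$ for $\pi(S)$, so that $g$ drops out of the product; the degenerate regimes (a trivial axis-sublattice, or $g$ exceeding the box width $2^{m_1+1}$, in which case every fiber is a singleton) are disposed of by a short case distinction, and the reduction from the shell $I_{\bsm}$ to the box $B_{\bsm}$ costs only a constant depending on $d$.
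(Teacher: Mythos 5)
Your treatment of the first case is correct and is exactly the paper's argument. The second case is where your proposal has a genuine gap, and it sits precisely at the point you call the heart of the matter. The inductive hypothesis you carry (pairwise differences with $r(\boldsymbol{w})\ge\rho$ imply $|S|\lesssim 2^{|\bsm|_1}/\rho$) is vacuous --- and in fact false --- whenever the separation parameter is $\le 1$: since $r\ge1$ for every vector, the hypothesis then places no restriction on $S$ at all. Your inductive step, however, is forced to invoke it with the reduced parameter $\rho/\ol{w}_1$, and this parameter is $\le1$ in the generic situation, not in a degenerate one. Indeed, if the axis sublattice is nontrivial with generator $g$, then $(g,0,\dots,0)\in\X^*\setminus\{\bszero\}$, so $\max\{1,g\}\ge\rho$ and hence (for $\rho>1$) $g\ge\rho$; your reduced separation $\rho/\max\{1,g/2\}$ therefore exceeds $1$ only in the narrow window $\rho\le g<2\rho$. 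When $g\ge2\rho$, or when the axis sublattice is trivial and $2^{m_1+1}\ge\rho$, the hypothesis gives no bound on $|\pi(S)|$ whatsoever, and observing that fibers are singletons does not dispose of anything, because then $|S|=|\pi(S)|$ still has to be counted. This cannot be patched inside your framework: for $d=2$ and integers $M\ge\rho\ge2$, the lattice $\X^*$ generated by $(2\rho M,0)$ and $(\rho,\rho/(2M))$ has Zaremba index exactly $\rho$ and axis generator $g=2\rho M$, while $\pi(\X^*)=\tfrac{\rho}{2M}\Z$; taking $m_2=0$ and $2^{m_1}\ge 2\rho M$ one gets $|\pi(S)|\approx 4M/\rho$, unbounded as $M\to\infty$. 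Here $|\pi(S)|$ is governed by the spacing (covolume) of the projected lattice, not by any separation quantity, so the bound $|\pi(S)|\lesssim 2^{|\bsm|_1-m_1}\,g/\rho$ that your cancellation requires is true in the example but not derivable from what your induction carries: a set in $[-1,1]$ whose pairwise differences merely satisfy $r\ge$ (something $\le1$) can be arbitrarily large.

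The missing idea is to localize the first coordinate at the scale of the separation \emph{before} projecting, instead of projecting the whole box. If you partition $B_{\bsm}$ into slabs of width $\rho/2$ in the $x_1$-direction, then inside one slab no two points of $\X^*$ share a projection (their difference would be an axis vector with $r=\max\{1,|w_1|\}<\rho$), and projected differences retain separation $r\ge\rho/\max\{1,\rho/2\}=\min\{\rho,2\}>1$; applying the inductive hypothesis slab by slab and multiplying by the $\lesssim 2^{m_1+1}/\rho+1$ slabs gives the claim, with $g$ never appearing. The paper does this in one stroke, without induction: since $|\bsm|_1\ge M:=\log\rho(\X)$, it picks $\bsell\in\N_0^d$ with $\ell_i\le m_i$ and $|\bsell|_1=\lceil M\rceil-1$, covers $\prod_i[-2^{m_i},2^{m_i}]$ by $2^{|\bsm-\bsell|_1+d}\le 2^{d+1}\,2^{|\bsm|_1}/\rho(\X)$ translates of the anisotropic box $J_{\bsell}=\prod_i[0,2^{\ell_i}]$, and notes that each translate contains at most one point of $\X^*$, since a difference of two such points would be a nonzero dual point with $r\le 2^{|\bsell|_1}<\rho(\X)$. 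The essential device is this anisotropic choice of localization scales, with exponents summing to just below $\log\rho(\X)$, which spends the separation exactly once across all coordinates; a coordinate-by-coordinate recursion that projects globally cannot do this. (A final caveat shared by all versions: one needs $\rho(\X)>1$; at $\rho(\X)=1$ your base-case claim that $r(w)\ge\rho$ forces $|w|\ge\rho$ is false, and the stated bound itself fails for dense dual lattices such as $\X^*=\eps\Z^d$.)
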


\begin{proof}
Let $M:=\log \bigl(\rho(\X)\bigr)$.
For $\bsx\in I_{\bsm}$, 
we have $r(\bsx) \le 2^{|\bsm|_1}$. This shows that, 
for $|\bsm|_1<M$, there is no $\bsx\in\X^*$ in $I_{\bsm}$, except possibly the origin. 
This proves the first bound.

The same applies to the boxes
$
\{\bsx\in\R^d\colon |x_i|\le 2^{\ell_i}, i=1,...,d\} 
$
with $|\bsell|_1<M$.
%
If we halve all sides of this set, i.e.~if we consider the sets
\[
J_{\bsell} \,:=\, \{\bsx\in\R^d\colon 0\le x_i\le2^{\ell_i}, i=1,...,d\}, 
\]
it is easy to see that all translates of $J_{\bsell}$, $|\bsell|_1<M$, 
contain at most one $\bsx\in\X^*$.

Now consider $|\bsm|_1\ge M$. Then there exists an $\bsell\in\N_0^d$ 
with $|\bsell|_1=\lceil M\rceil-1<M$ such that $\bsm-\bsell\in\N_0^d$.
Clearly, $I_{\bsm}\subset\{\bsx\in\R^d\colon |x_i|\le2^{m_i}, i=1,...,d\}$
can be covered by $2^{|\bsm-\bsell|_1+d}=2^{|\bsm|_1-\lceil M\rceil+d+1} \le 2^{|\bsm|_1+d+1}/\rho(\X)$ 
translates of $J_{\bsell}$, each containing at most one $\bsx\in\X^*$. 
This proves the second bound.\\
\end{proof}

Together with Theorem~\ref{thm:general} this implies the following.

\begin{theorem} \label{thm:general2}
Let $T\in\R^{d\times d}$ be invertible with $d_T:=1/|\det(T)|\ge1$, $\X=T(\Z^d)$ and $Q_T$ as in~\eqref{eq:latticerule}.
Then, for $1\le p,\theta\le\infty$ and $s>1/p$, we have
\[
{\rm wce}(Q_T, \Bo) \,\lesssim\, \rho(\X)^{-s} \bigl(1+\log(d_T)\bigr)^{(d-1)(1-1/\theta)}.
\]
The hidden constant only depends on the quantity $M_{B,[-1,2]^d}/|\det(B)|$, 
see Remark~\ref{rem:covering}.
\end{theorem}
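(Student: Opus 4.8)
**

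The plan is to combine Theorem~\ref{thm:general} with the counting estimate from Lemma~\ref{lem:points} and then evaluate the resulting sum over $\bsm\in\N_0^d$. The starting point is the bound
\[
{\rm wce}(Q_T, \Bo) \,\lesssim\, \left(\sum_{\bsm\in\N_0^d}
	2^{-s |\bsm|_1 \theta'}\,|\X^*\cap I_{\bsm}\setminus\{\bszero\}|^{\theta'/p}\right)^{1/\theta'}.
\]
Substituting the two-case estimate from Lemma~\ref{lem:points} kills all terms with $|\bsm|_1 < \log(\rho(\X))$, and on the remaining terms replaces the counting factor by $(2^{|\bsm|_1}/\rho(\X))^{\theta'/p}$. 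Writing $M:=\log(\rho(\X))$ and grouping the sum according to the value $n:=|\bsm|_1$, the key quantity to control is the number of $\bsm\in\N_0^d$ with $|\bsm|_1=n$, which is $\binom{n+d-1}{d-1}$ and hence is polynomial in $n$ of degree $d-1$.

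The main technical step is to evaluate the one-dimensional sum over $n\ge M$. After inserting the counting bound, each level contributes a factor $2^{-sn\theta'}2^{n\theta'/p}$ times the polynomial level-count $\binom{n+d-1}{d-1}\lesssim (1+n)^{d-1}$, all divided by $\rho(\X)^{\theta'/p}$. First I would factor out the leading behaviour: the exponential part $2^{n\theta'(1/p-s)}$ is summable precisely because $s>1/p$, so the geometric tail starting at $n=\lceil M\rceil$ is dominated (up to a constant depending on $s,p,\theta$) by its first term $2^{M\theta'(1/p-s)}=\rho(\X)^{\theta'(1/p-s)}$. The remaining issue is the polynomial factor $(1+n)^{d-1}$ multiplying the geometric terms; since the smallest index is $n\approx M=\log\rho(\X)$, the dominant polynomial weight is of order $(1+\log\rho(\X))^{d-1}$. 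Carefully, a geometric-type sum $\sum_{n\ge M}(1+n)^{d-1}q^{\,n-M}$ with $0<q<1$ is bounded by $(1+M)^{d-1}$ times a constant depending only on $q$ and $d$, so this is where the logarithmic factor enters.

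Collecting these estimates, the sum inside the parentheses is
\[
\lesssim\, \rho(\X)^{-\theta'/p}\cdot \rho(\X)^{\theta'(1/p-s)}\cdot \bigl(1+\log\rho(\X)\bigr)^{d-1}
\,=\, \rho(\X)^{-s\theta'}\bigl(1+\log\rho(\X)\bigr)^{d-1}.
\]
Taking the $\theta'$-th root yields $\rho(\X)^{-s}(1+\log\rho(\X))^{(d-1)/\theta'}$, and since $1/\theta'=1-1/\theta$ this matches the exponent $(d-1)(1-1/\theta)$ in the claimed logarithmic factor. The final step is to replace $\rho(\X)$ inside the logarithm by $d_T=1/|\det(T)|$. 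Here I would use that the Zaremba index is controlled by the determinant: since any nonzero $\bsz\in\X^*$ satisfies $r(\bsz)\le \prod_j |z_j|$ in the relevant regime and the covolume of $\X^*$ equals $|\det(B)|=|\det(T)|^{-1}=d_T$, a volume/pigeonhole argument gives $\rho(\X)\lesssim d_T$, so $1+\log\rho(\X)\lesssim 1+\log d_T$. The hidden constant tracking only depends on $M_{B,[-1,2]^d}/|\det(B)|$ as inherited from Theorem~\ref{thm:general}, together with the $s,p,\theta,d$-dependent constants from summing the series.

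I expect the main obstacle to be the clean bookkeeping of the constants and the precise statement relating $\rho(\X)$ to $d_T$. The summation itself is routine once organized by level sets of $|\bsm|_1$, but one must check the boundary cases $\theta=1$ (so $\theta'=\infty$, requiring a supremum rather than a sum) and $\theta=\infty$ (so $\theta'=1$) separately, and verify that the geometric-tail estimate is uniform in these limits. The inequality $\rho(\X)\lesssim d_T$ is the only genuinely geometric input beyond Theorem~\ref{thm:general} and Lemma~\ref{lem:points}, and getting it with the correct dependence — so that the logarithm of the Zaremba index can be safely upgraded to the logarithm of the determinant — is the step that most warrants care.
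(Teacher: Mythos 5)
Your proposal is correct and follows essentially the same route as the paper: combine Theorem~\ref{thm:general} with Lemma~\ref{lem:points}, sum over level sets of $|\bsm|_1$ using the $(\ell+1)^{d-1}$ level count, bound the geometric tail starting at $\lceil\log\rho(\X)\rceil$ by its leading term times $(1+\log\rho(\X))^{(d-1)/\theta'}$, and finally pass from $\log\rho(\X)$ to $\log d_T$. Your ``volume/pigeonhole argument'' for $\rho(\X)\lesssim d_T$ is exactly the paper's appeal to Minkowski's theorem (which gives the clean inequality $\rho(\X)\le|\det(T^{-1})|=d_T$), so the two proofs coincide in all essentials.
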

\medskip

\begin{proof}
Let $M:=\log\bigl(\rho(\X)\bigr)$ and note that 
$|\{\bsm\in\N_0^d\colon |\bsm|_1=\ell\}|\le(\ell+1)^{d-1}$.
By Theorem~\ref{thm:general} and Lemma~\ref{lem:points} we have
\[\begin{split}
{\rm wce}(Q_T, \Bo) \,&\lesssim\, \left(\sum_{\bsm\in\N_0^d} 
	2^{-s |\bsm|_1 \theta'}\,|\X^*\cap I_{\bsm}\setminus\{\bszero\}|^{\theta'/p}\right)^{1/\theta'}\\
\,&\lesssim\, \rho(\X)^{-1/p}\left(\sum_{\bsm\colon|\bsm|_1\ge M} 
	2^{(1/p-s) |\bsm|_1 \theta'}\right)^{1/\theta'}\\
\,&\le\, \rho(\X)^{-1/p}\left(\sum_{\ell=\lceil M\rceil}^\infty 
	(\ell+1)^{d-1}\,2^{(1/p-s) \ell \theta'}\right)^{1/\theta'} \\
\,&\lesssim\, \rho(\X)^{-1/p}\;2^{M (1/p-s)}\,(M+1)^{(d-1)/\theta'}\left(\sum_{\ell=1}^\infty 
	(\ell+1)^{d-1}\,2^{(1/p-s) \ell \theta'}\right)^{1/\theta'}.
\end{split}\]
It follows from Minkowski's theorem that $\rho(\X)\le|\det(T^{-1})|$, and hence $M\le\log(d_T)$. 
This proves the result since $s>1/p$.\\
\end{proof}


\section{Application to specific point sets}
\label{sec:application}

In this section we apply the general results from the last section 
to specific point sets. More precisely, we study Kronecker lattices and rank-1 lattice point sets in dimensions 
$d\ge2$. By Theorem~\ref{thm:general2} it is enough to bound the Zaremba index 
of these lattices. However, since the bounds on the Zaremba index are 
worse in higher dimensions, we treat the lower-dimensional cases separately.

\subsection{Kronecker lattices}\label{subsec:Kronecker}
We study {\it Kronecker lattices} which are
point sets of the form
\begin{equation}\label{eq:Pa}
\cP_N(\bsalpha)
\,=\, \left\{ \left(\frac{n}{N}, \{n \alpha_1\}, \ldots, \{n \alpha_{d-1}\}\right) \ : \ n = 0, 1, \ldots, N-1\right\},
\end{equation}
where $\bsalpha=(\a_1,...,\a_{d-1})\in\R^{d-1}$. These point sets can be written as 
\[
\cP_N(\bsalpha) \,=\, \X_N(\bsalpha)\cap[0,1)^d,
\]
where $\X_N(\bsalpha)=T_N(\Z^d)$ and
\begin{equation}\label{eq:Ta}
T_N=T_N(\bsalpha) = \begin{pmatrix} 
0 & \ldots & \ldots & 0 & 1/N \\
1 & 0        & \ldots & 0 & \alpha_1 \\
0 & \ddots & \ddots & \vdots & \vdots \\
\vdots & \ddots & \ddots & 0 & \vdots \\
0 & \ldots & 0 & 1 & \alpha_{d-1}
\end{pmatrix} \in \mathbb{R}^{d \times d}.
\end{equation}
Note that $\det(T_N)=(-1)^{d+1}/N$ and hence $d_{T_N}=N$. 
Hence, we can use the results from the last section to prove upper bounds 
on the error of the cubature rule $Q_{T_N}$.
Recall that the dual lattice of $\X_N(\bsalpha)$ is 
$\X_N^*(\bsalpha):=B_N(\Z^d)$ with 
\begin{equation}\label{eq:Ba}
B_N=B_N(\bsalpha)= (T_N^{\top})^{-1} = \begin{pmatrix}
-N\a_1 & -N\a_2 & \ldots & -N\a_{d-1} & N \\
1 & 0 & \ldots & 0 & 0 \\
0 & \ddots & \ddots & \vdots &  \vdots \\
\vdots & \ddots & \ddots & 0 & 0 \\
0 & \ldots & 0 & 1 & 0
\end{pmatrix}.
\end{equation}

\begin{remark}\rm 
\label{remark:bounds_on_M}
In view of Remark~\ref{rem:covering} the sequence of matrices $B_N$, 
$N\ge1$, from \eqref{eq:Ba} satisfies
\[
\sup_{N \ge 1}\, \frac{M_{B_N,[-1,2]^d}}{|\det(B_N)|} \,\le\, c_{\bsalpha} \,<\, \infty.
\]
Thus we can apply Theorem \ref{thm:general2} for the Kronecker lattices, ignoring the hidden constants.
To see this note that $|\det(B_N)|=N$ and that $B_N^\top([-1,2]^d)$ is a $d$-dimensional 
oblique prism with translated copies of $[-1,2]^{d-1} \times \{0\}$ as base 
faces. The translation vector is $(N\alpha_1,\ldots,N\alpha_{d-1},-N)$ for the ``bottom'' base and $(-2 N \alpha_1,\ldots,-2 N \alpha_{d-1},2N)$ for the ``top'' base and hence the height is $3N$ (see Figure~\ref{f1} for $d=2$). 
\begin{figure}[htp]
\begin{center}
\input{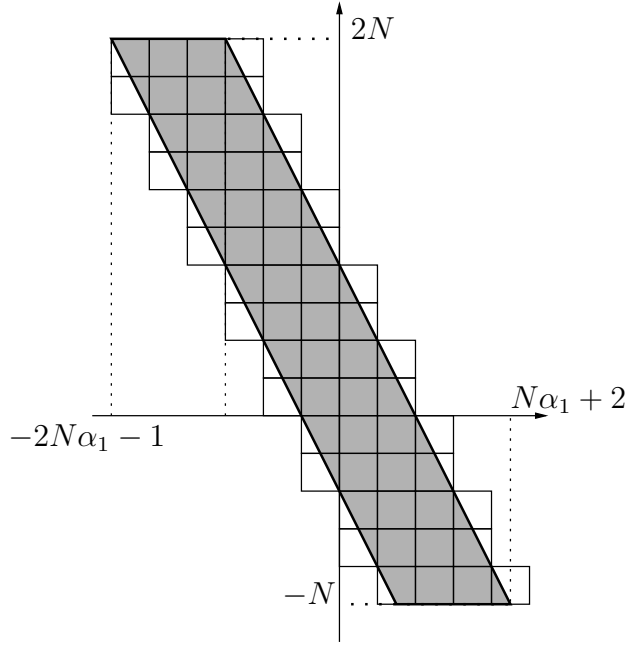}
\caption{Covering of $B_N^\top([-1,2]^2)$ with unit-squares.}
     \label{f1}
\end{center}
\end{figure}
\end{remark}

Since all lattices that follow will be of this form we use the notation
\[
\rho_d(N,\bsalpha) \,:=\, \rho\bigl(\X_N(\bsalpha)\bigr).
\]
Moreover, we write
\[
{\rm wce}\bigl(\cP_N(\bsalpha), \Bo\bigr) \,:=\, {\rm wce}\bigl(Q_{T_N}, \Bo\bigr).
\]

In the following we fix a vector $\bsalpha\in\R^{d-1}$ and just let $N$ grow 
in~\eqref{eq:Pa}. 
Given a ``good'' $\bsalpha$, this makes the point set $\cP_N(\bsalpha)$ particularly easy to implement, 
since one needs only $N d$ arithmetic operations. 

It is not surprising, and well known, that bounds on the Zaremba index of lattices $\X_N(\bsalpha)$  
depend on Diophantine approximation properties of the vector $\bsalpha$. 
More precisely, a lattice has large Zaremba index if the involved numbers 
$\a_1,...,\a_{d-1}$ are badly approximable (in a certain sense). 
This is reflected by the following lemma.

\begin{lemma}\label{lem:Zaremba}
Let $\bsalpha\in\R^{d-1}$ and $\psi: \NN \rightarrow \RR^+$ be nondecreasing 
such that
\begin{equation}\label{eq:prop-a}
\left(\prod_{j=1}^{d-1} \overline{k}_j\, \psi(\overline{k}_j) \right)  
	\langle \bsalpha \cdot \bsk\rangle \ge c=c(d,\bsalpha)>0
\end{equation}
for all $\bsk=(k_1,\ldots,k_{d-1}) \in \Z^{d-1}\setminus\{\bszero\}$, 
where $\ls x\rs:=\min_{m\in\Z}|x-m|$ is the distance of $x\in\R$ to the nearest integer. 
Then, with $c'=\min\{c, \psi(1)^{d-1}\}$, we have 
\[
\rho_d(N,\bsalpha) \,\ge\, \frac{c' N}{\psi(N)^{d-1}}\ \ \ \ \mbox{for all $N\ge1$.}
\]
\end{lemma}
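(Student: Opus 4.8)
The plan is to bound the Zaremba index $\rho_d(N,\bsalpha) = \inf_{\bsz \in \X_N^*\setminus\{\bszero\}} r(\bsz)$ directly from below by understanding the structure of the dual lattice $\X_N^*(\bsalpha) = B_N(\Z^d)$. First I would write out a general element of the dual lattice explicitly: for $\bsk = (k_0, k_1, \ldots, k_{d-1}) \in \Z^d$, applying $B_N$ from~\eqref{eq:Ba} yields a vector $\bsz = B_N \bsk$ whose first component is $N(k_{d-1} - \sum_{j=1}^{d-1}\alpha_j k_j)$ and whose remaining components are $k_0, k_1, \ldots, k_{d-2}$ (reading off the shifted-identity block of $B_N$). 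The key observation is that $k_{d-1}$ appears \emph{only} in the first coordinate, so for fixed $(k_0,\ldots,k_{d-2})$ the integer $k_{d-1}$ is free to be chosen as the nearest integer to $\sum_j \alpha_j k_j$; this is precisely what connects the first coordinate of $\bsz$ to the distance-to-nearest-integer quantity $\ls \bsalpha \cdot \bsk'\rs$ appearing in the Diophantine hypothesis~\eqref{eq:prop-a}.

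Next I would split into the two natural cases according to whether $\bsk' := (k_1,\ldots,k_{d-1})$ (the multi-index feeding into~\eqref{eq:prop-a}, suitably reindexed to match the $d-1$ components $k_0,\ldots,k_{d-2}$ that survive into $\bsz$) is zero or not. If all the surviving integer components are zero, then $\bsz$ is a nonzero multiple of the first basis direction, and one checks directly that $r(\bsz) \ge \psi(1)^{d-1}$-type bound holds, which explains the $\psi(1)^{d-1}$ term entering $c'$. In the generic case where $\bsk'\neq\bszero$, I would estimate $r(\bsz) = \ol{z_1}\prod_{i=2}^d \ol{z_i}$ from below. The tail factors $\prod_{i\ge 2}\ol{z_i}$ are exactly $\prod_j \ol{k}_j$ over the surviving indices, and the leading factor $\ol{z_1}$ is at least the absolute value of $N(k_{d-1}-\bsalpha\cdot\bsk')$, which is minimized at $\ol{z_1} \ge N\ls\bsalpha\cdot\bsk'\rs$. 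Multiplying these and invoking~\eqref{eq:prop-a} gives
\[
r(\bsz) \,\ge\, N\,\ls\bsalpha\cdot\bsk'\rs \prod_j \ol{k}_j \,\ge\, \frac{cN}{\prod_j \psi(\ol{k}_j)}.
\]

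To finish I would use that $\psi$ is nondecreasing together with the constraint coming from the box $I_{\bsm}$ / the relevant range of $\bsk$: the only multi-indices $\bsk'$ that can possibly realize a small value of $r(\bsz)$ are those with all $\ol{k}_j \le N$, since a larger component already forces $r(\bsz)$ to exceed the target. Hence $\psi(\ol{k}_j) \le \psi(N)$ for each of the $d-1$ factors, so $\prod_j \psi(\ol{k}_j) \le \psi(N)^{d-1}$, which converts the bound into $r(\bsz) \ge cN/\psi(N)^{d-1}$. Combining the two cases with $c' = \min\{c,\psi(1)^{d-1}\}$ yields $\rho_d(N,\bsalpha)\ge c'N/\psi(N)^{d-1}$. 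The main obstacle I anticipate is the bookkeeping in the degenerate case and in justifying the restriction $\ol{k}_j \le N$: one must argue carefully that indices with some large $\ol{k}_j$ cannot produce a counterexample to the claimed lower bound, so that replacing each $\psi(\ol{k}_j)$ by the monotone majorant $\psi(N)$ is legitimate without losing the infimum. The Diophantine hypothesis itself does all the analytic work; the rest is a matter of matching the lattice coordinates to the factors of $r(\bsz)$ correctly.
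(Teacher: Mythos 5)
Your proposal is correct and takes essentially the same route as the paper's proof: you expand $\bsz=B_N\bsk$ coordinate-wise, exploit the free integer entry to bring in $\ls \bsalpha\cdot\bsk'\rs$ via the hypothesis \eqref{eq:prop-a}, and handle exactly the same three situations the paper does (all surviving components zero, some component of modulus at least $N$, and the generic case $|\bsk'|_\infty\le N$ where the Diophantine bound applies). The ``obstacle'' you flag at the end is in fact already resolved by your own observation that a component exceeding $N$ forces $r(\bsz)\ge N$, which is precisely the paper's second case.
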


\begin{proof}
To bound $\rho_d(N,\bsalpha)$, we have to bound $r(\bsz)$ uniformly over all
$\bsz\in\X^*_N(\bsalpha)\setminus\{\bszero\}$, see~\eqref{eq:Zaremba}.
By definition of $\X^*_N(\bsalpha)$ and $B_N$, we have for 
$\bsz:=B_N \bsk'\in\X^*_N(\bsalpha)\setminus\{\bszero\}$, $\bsk'=(k_1,...,k_d)\in\Z^d$, that
\[
r(\bsz) \,=\, \prod_{j=1}^d \ol{(B_N\bsk')_j}
\,=\, \left(\prod_{j=1}^{d-1}\, \overline{k}_j\right) \, 
\ol{(-N\a_1 k_1-\dots -N\a_{d-1} k_{d-1} + Nk_d)}.
\]
Since $\bsz\neq\bszero$ we have $\bsk'\neq\bszero$. We distinguish three cases. 

First assume that $k_1=...=k_{d-1}=0$ and hence $k_d\not=0$. 
This already implies $r(\bsz)\ge N$. 

Next assume that $|k_j|\ge N$ for some $j=1,...,d-1$. Then it follows from the definition that 
$r(\bsz)\ge \ol{k}_j\ge N$.

Finally, assume that 
$\bsk:=(k_1,\ldots,k_{d-1}) \in \Z^{d-1}\setminus\{\bszero\}$ with $|\bsk|_\infty\le N$. 
Clearly, by choosing the right $k_d\in\Z$, we have by assumption
\[\begin{split}
r(\bsz) \,&\ge\, N \left(\prod_{j=1}^{d-1}\, \overline{k}_j\right) \, |-\a_1 k_1-\dots -\a_{d-1} k_{d-1} + k_d|
\,\ge\, N\,\left(\prod_{j=1}^{d-1}\, \overline{k}_j\right) \ls\bsalpha\cdot\bsk\rs \\
\,&\ge\, \frac{c N}{\prod_{j=1}^{d-1}\psi\bigl(\overline{k}_j\bigr)}
\,\ge\, \frac{c N}{\psi(N)^{d-1}}.
\end{split}\]
This shows that in any case 
$r(\bsz)\ge\min\{N,c N/\psi(N)^{d-1}\}\ge N/\psi(N)^{d-1}\min\{\psi(1)^{d-1},c\}$ 
and thus
proves the claim. \\
\end{proof}

We now treat the cases $d=2$ and $d\ge3$ separately, since the known results 
on the existence of (simultaneously) badly approximable numbers differ in these cases.

\subsubsection{The case $d=2$} \label{subsec:Kronecker_dim2}

We say that a real number $\alpha$ is {\it badly approximable}, if there 
is a positive constant $c_0=c_0(\alpha)>0$ such that 
\[k \langle k \alpha \rangle \ge c_0 >0 \ \ \ \ 
\mbox{ for all integers $k \ge 1$.}
\] 
It is well known that an irrational number $\alpha$ is badly approximable 
if and only if the sequence $a_1,a_2,a_3,\ldots$ of partial quotients in 
the continued fraction expansion of $\alpha=[a_0;a_1,a_2,a_3,\ldots]$ is 
bounded, i.e. there is some $M=M(\alpha)>0$ such that $a_j \le M$ for all 
integers $j \ge 1$, see e.g.~\cite{Khinchin}.
For example for the golden ratio $\alpha = (1+\sqrt{5})/2$ we have that the 
continued fraction coefficients are all 1 and hence of course are also bounded.  

It is easily seen from the definition of badly approximable numbers that 
we can apply Lemma~\ref{lem:Zaremba} with $d=2$ and $\psi\equiv1$, 
which proves that, for the constant $c_0'=\min(1,c_0)$,
\[
\rho_2(N,\a) \,\ge\, c_0' N
\]
for all $N\in\N$.
This implies the following theorem.

\begin{theorem}\label{thm:Kronecker_dim2}
Let $\alpha$ be a badly approximable number, $N \in \mathbb{N}$ and 
$\mathcal{P}_N(\a)$ as in \eqref{eq:Pa}. 
Then, for $1\le p,\theta\le\infty$ and $s>1/p$,
\begin{equation*}
\mathrm{wce}(\mathcal{P}_N(\a),\Bo)  
\,\asymp\, \frac{(\log N)^{1-1/\theta}}{N^s}.
\end{equation*}
\end{theorem}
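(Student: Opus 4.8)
The plan is to prove the upper bound via Theorem~\ref{thm:general2} and the lower bound by exhibiting a single test function (or, equivalently, invoking the known optimality of the convergence rate). The statement asserts an exact order $\asymp$, which means I must establish matching upper and lower bounds.

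For the \textbf{upper bound}, the work is essentially already done by the machinery built up in the excerpt. First I would recall that for the Kronecker lattice $\X_N(\a)$ with $d=2$ we have $d_{T_N}=N$ (from $\det(T_N)=\pm1/N$), and that the covering quantity $M_{B_N,[-1,2]^2}/|\det(B_N)|$ is bounded uniformly in $N$ by Remark~\ref{remark:bounds_on_M}, so the hidden constant in Theorem~\ref{thm:general2} does not depend on $N$. Next, since $\a$ is badly approximable, the discussion just before the theorem gives $\rho_2(N,\a)\ge c_0' N$ via Lemma~\ref{lem:Zaremba} with $\psi\equiv1$. Plugging $\rho(\X_N)\gtrsim N$ and $d_{T_N}=N$ into Theorem~\ref{thm:general2} with $d=2$ yields
\[
{\rm wce}(\cP_N(\a),\Bo)\,\lesssim\,\rho_2(N,\a)^{-s}\bigl(1+\log(d_{T_N})\bigr)^{(2-1)(1-1/\theta)}\,\lesssim\,\frac{(\log N)^{1-1/\theta}}{N^s},
\]
which is exactly the claimed upper bound.

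For the \textbf{lower bound}, the plan is to reduce to the already-known optimal lower bound for integration in $\Bo$. The rate $(\log N)^{(d-1)(1-1/\theta)}/N^s$ was quoted in the introduction (for Frolov's rule and proved optimal in the cited references) as the true order of the minimal worst-case error over all algorithms using $N$ points; specializing to $d=2$ gives the lower bound $(\log N)^{1-1/\theta}/N^s$ for \emph{any} cubature rule, hence in particular for $Q_{T_N}$. The only subtlety I would check is the normalization: the number of points of $\cP_N(\a)$ in $[0,1)^2$ is exactly $N$, so the general lower bound with $N$ nodes applies directly and no reindexing is needed.

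The \textbf{main obstacle} is really just bookkeeping rather than a genuine mathematical difficulty: one must make sure the uniform-in-$N$ control of the constant in Theorem~\ref{thm:general2} is legitimately invoked (so that the upper bound is uniform and not merely $N$-by-$N$), and one must cite the correct source for the lower bound at the right generality. If instead a self-contained lower bound is wanted, the alternative approach is to construct an explicit $f\in\Bo$ (for example a smooth bump supported in a small rectangle of volume comparable to $1/N$ whose $\Bo$-norm is controlled) for which $|I(f)-Q_{T_N}(f)|$ is of the asserted order; but since the optimality of this rate for the class $\Bo$ is already established in the literature, invoking it is the cleaner route, and I would take that path.
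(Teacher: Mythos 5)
Your proposal matches the paper's proof: the upper bound is obtained exactly as you describe, from $\rho_2(N,\a)\ge c_0'N$ (Lemma~\ref{lem:Zaremba} with $\psi\equiv1$) plugged into Theorem~\ref{thm:general2} with the uniform constant guaranteed by Remark~\ref{remark:bounds_on_M}, and the lower bound is likewise handled by citing the known optimal lower bound for arbitrary cubature rules with $N$ nodes (the paper points to \cite[Theorem~7.3]{UU16}). Your bookkeeping checks (number of points exactly $N$, uniformity of the hidden constant) are precisely the right ones and raise no issues.
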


\begin{proof}
The upper bound follows from Theorem~\ref{thm:general2} and the lower 
bound was proven, e.g., in~\cite[Theorem~7.3]{UU16}.\\
\end{proof}


\subsubsection{The case $d\ge3$}

Unfortunately, in dimensions greater two the results are not as satisfactory as 
for $d=2$. That is, we do not know if vectors $\bsalpha$ exist that give the 
optimal order of convergence of the corresponding (general) lattice rule.

Assume for the moment that we have a vector $\bsalpha \in \mathbb{R}^{d-1}$ for $d \ge 3$ 
such that 
\[
r(\bsk) \langle \bsalpha \cdot \bsk \rangle >c=c(\bsalpha)>0\qquad\mbox{ for all $\bsk \in \mathbb{Z}^{d-1}\setminus\{\bszero\}$.}
\] 
In this case we could show that $\mathrm{wce}(\mathcal{P}_N(\bsalpha),\Bo) \lesssim (\log N)^{(d-1)(1-1/\theta)}/N^s$, 
which is the optimal order of convergence. 
However, a famous conjecture of Littlewood states that there is no $\bsalpha \in \mathbb{R}^{d-1}$, $d \ge 3$, 
with this property, see e.g.~\cite{badziahin}. See also \cite{beck} for a discussion of this Diophantine problem in the context of the discrepancy of $(n \bsalpha)$-sequences.

The best we can hope for at the moment for our problem are metrical results. These are based on the following lemma.

\begin{lemma}\label{lem:Zaremba2}
Let $\psi: \NN \rightarrow \RR^+$ be non-decreasing 
such that the series $\sum_{n \ge 1} \frac{1}{n \psi(n)} < \infty$. 
Then for almost every $\bsalpha \in \RR^{d-1}$ and every $N\ge1$ we have
\[
\rho_d(N,\bsalpha) \,\ge\, \frac{c N}{\psi(N)^{d-1}}
\]
for some $c>0$.
For example, we can choose $\psi(N)=(\log N) (\log\log N)^{1+\delta}$ for arbitrary $\delta>0$ for $N \ge 3$ and $\psi(N)=1$ for $N <3$.
\end{lemma}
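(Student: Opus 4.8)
The plan is to prove the lower bound $\rho_d(N,\bsalpha) \ge c N / \psi(N)^{d-1}$ for almost every $\bsalpha$ by combining Lemma~\ref{lem:Zaremba} with a metric result from the theory of simultaneous Diophantine approximation. Lemma~\ref{lem:Zaremba} already reduces everything to verifying the Diophantine condition~\eqref{eq:prop-a}, namely that
\[
\Big(\prod_{j=1}^{d-1} \overline{k}_j\, \psi(\overline{k}_j)\Big)\, \langle \bsalpha\cdot\bsk\rangle \,\ge\, c \,>\, 0
\]
for all $\bsk\in\Z^{d-1}\setminus\{\bszero\}$. So the real content is to show that this holds for almost every $\bsalpha\in\R^{d-1}$ whenever $\sum_{n\ge1} 1/(n\psi(n))<\infty$.

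First I would reformulate the condition as a convergence Borel--Cantelli statement. For a fixed $\bsk$, the set of $\bsalpha$ (restricted to a fundamental domain such as $[0,1)^{d-1}$, using periodicity of $\langle\cdot\rangle$) for which $\langle\bsalpha\cdot\bsk\rangle < c\, \big(\prod_{j=1}^{d-1}\overline{k}_j\,\psi(\overline{k}_j)\big)^{-1}$ is a union of slabs of total measure of order $c\,\big(\prod_{j}\overline{k}_j\,\psi(\overline{k}_j)\big)^{-1}$, since the linear form $\bsalpha\mapsto\langle\bsalpha\cdot\bsk\rangle$ is equidistributed and the fibre where it is small has measure proportional to the threshold. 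The plan is then to sum these measures over all $\bsk\in\Z^{d-1}\setminus\{\bszero\}$ and show the total is finite (for suitable $c$, or after noting convergence is what matters), so that by the convergence part of the Borel--Cantelli lemma almost every $\bsalpha$ lies in only finitely many of these ``bad'' sets; a standard argument then upgrades this to the uniform lower bound~\eqref{eq:prop-a} with a possibly smaller constant $c=c(\bsalpha)$.

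The key estimate is therefore to show
\[
\sum_{\bsk\in\Z^{d-1}\setminus\{\bszero\}} \frac{1}{\prod_{j=1}^{d-1}\overline{k}_j\,\psi(\overline{k}_j)} \,<\,\infty.
\]
This factorizes (up to the harmless handling of zero components, where $\overline{k}_j=1$) into a product of one-dimensional sums. The crucial one-dimensional series is $\sum_{k\ge1} 1/(k\,\psi(k))$, which is finite precisely by the hypothesis, while for the remaining coordinates one uses $\sum_{k\ge0} 1/(\overline{k}\,\psi(\overline{k}))$, again finite under the same assumption since $\psi$ is nondecreasing and positive. Multiplying the convergent one-dimensional sums across the $d-1$ coordinates keeps the multidimensional sum finite, which closes the Borel--Cantelli argument.

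I expect the main obstacle to be the measure estimate for each individual slab and its uniformity in $\bsk$: one must argue carefully that the set of $\bsalpha$ making the single linear form $\bsalpha\cdot\bsk$ close to an integer has measure comparable to the prescribed threshold, uniformly over $\bsk$, and that the resulting sets are genuinely summable rather than merely individually small. A clean way to handle this is to fix the coordinate of $\bsk$ of largest absolute value and integrate out the corresponding component of $\bsalpha$ first, reducing $\langle\bsalpha\cdot\bsk\rangle$ to a one-dimensional near-integer condition of measure at most twice the threshold, with the other components free; this gives the factor $\big(\prod_{j}\overline{k}_j\,\psi(\overline{k}_j)\big)^{-1}$ cleanly. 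The verification of the explicit example $\psi(N)=(\log N)(\log\log N)^{1+\delta}$ is then immediate, since $\sum_{n\ge3} 1/\big(n\,(\log n)(\log\log n)^{1+\delta}\big)<\infty$ by the integral test, completing the proof.
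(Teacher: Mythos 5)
Your argument is correct, but it takes a genuinely different route from the paper. The paper's proof of this lemma is a one-line reduction: it invokes Lemma~\ref{lem:Zaremba} and then cites Bakhvalov \cite[Lemma~5]{bhak} for the metric statement that the Diophantine condition~\eqref{eq:prop-a} holds for almost every $\bsalpha\in\RR^{d-1}$ whenever $\sum_{n\ge1}\frac{1}{n\psi(n)}<\infty$. You instead prove that metric statement from scratch: the convergence Borel--Cantelli lemma applied to the sets $A_{\bsk}$ of $\bsalpha\in[0,1)^{d-1}$ with $\langle\bsalpha\cdot\bsk\rangle$ below the threshold, with the measure bound $|A_{\bsk}|\lesssim \bigl(\prod_j \overline{k}_j\psi(\overline{k}_j)\bigr)^{-1}$ obtained by integrating out the coordinate of $\bsk$ of largest modulus, and the summability over $\bsk\neq\bszero$ following because the sum factorizes into $d-1$ convergent one-dimensional series --- exactly where the hypothesis enters. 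This is a sound and self-contained replacement for the citation; what the paper's route buys is brevity, while yours makes the mechanism (and the role of the convergence hypothesis) explicit. One small point you should make explicit in the final upgrade step: after Borel--Cantelli leaves only finitely many exceptional $\bsk$, you need $\langle\bsalpha\cdot\bsk\rangle>0$ for each of them in order to take the minimum as your constant $c(\bsalpha)$; this holds off a further null set (those $\bsalpha$ for which $1,\alpha_1,\ldots,\alpha_{d-1}$ are linearly dependent over $\QQ$), so the conclusion is unaffected, but the exclusion should be stated. Also note that passing from $[0,1)^{d-1}$ to $\RR^{d-1}$ is justified by the periodicity $\langle(\bsalpha+\boldsymbol{e}_j)\cdot\bsk\rangle=\langle\bsalpha\cdot\bsk\rangle$ and a countable union of translates, as you implicitly use.
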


\begin{proof}
From \cite[Lemma~5]{bhak} we obtain that, under the assumptions of the lemma, we can 
apply Lemma~\ref{lem:Zaremba} for almost every $\bsalpha\in \RR^{d-1}$. \\
\end{proof}

This implies the following result.

\begin{theorem}\label{thm:Kronecker}
Let $\psi: \NN \rightarrow \RR^+$ be non-decreasing such that 
$\sum_{n \ge 1} \frac{1}{n \psi(n)} < \infty$. Then, for almost all $\bsalpha \in \RR^{d-1}$ and every $N \ge 1$
we have 
\[
\mathrm{wce}(\mathcal{P}_N(\bsalpha),\Bo) \,\lesssim\, \frac{(\log N)^{(d-1)(1-1/\theta)} }{N^s}\, \psi(N)^{s(d-1)}.
\] 
For example, for $\delta>0$ for almost all $\bsalpha \in \RR^{d-1}$ we have 
\[
\mathrm{wce}\bigl(\mathcal{P}_N(\bsalpha),\Bo\bigr) \,\lesssim\,\frac{(\log N)^{(d-1)(s+1-1/\theta)}}{N^s} (\log \log N)^{s(d-1)(1+\delta)}.
\]
\end{theorem}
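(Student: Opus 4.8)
The plan is to obtain the theorem as a direct composition of two results already established, namely the general worst-case error bound of Theorem~\ref{thm:general2} and the almost-everywhere lower bound on the Zaremba index of Lemma~\ref{lem:Zaremba2}. First I would fix, once and for all, a vector $\bsalpha\in\RR^{d-1}$ belonging to the full-measure set furnished by Lemma~\ref{lem:Zaremba2}. For every such $\bsalpha$ the inequality $\rho_d(N,\bsalpha)\ge cN/\psi(N)^{d-1}$ holds \emph{simultaneously} for all $N\ge1$, with a constant $c=c(\bsalpha)>0$. The point worth recording here is that the single exceptional null set is independent of $N$; this is exactly the order of quantifiers demanded by the phrase ``for almost all $\bsalpha$ and every $N$'' in the statement, and it is already built into Lemma~\ref{lem:Zaremba2}.

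Next I would apply Theorem~\ref{thm:general2} to the generating matrix $T_N=T_N(\bsalpha)$ from \eqref{eq:Ta}. Since $\det(T_N)=(-1)^{d+1}/N$ we have $d_{T_N}=N$, so the theorem gives
\[
\mathrm{wce}(\cP_N(\bsalpha),\Bo)\,\lesssim\,\rho_d(N,\bsalpha)^{-s}\,(1+\log N)^{(d-1)(1-1/\theta)}.
\]
Here one must verify that the implied constant does not deteriorate as $N\to\infty$: by Theorem~\ref{thm:general2} it depends only on the ratio $M_{B_N,[-1,2]^d}/|\det(B_N)|$, and Remark~\ref{remark:bounds_on_M} guarantees $\sup_{N\ge1}M_{B_N,[-1,2]^d}/|\det(B_N)|\le c_{\bsalpha}<\infty$. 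Hence the constant in $\lesssim$ may be taken uniform in $N$ (it may, and will, depend on $\bsalpha$ as well as on $d,p,s,\theta$).

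Finally I would insert the Diophantine lower bound, which yields $\rho_d(N,\bsalpha)^{-s}\le c^{-s}N^{-s}\psi(N)^{s(d-1)}$, and combine it with the previous display, absorbing $(1+\log N)^{(d-1)(1-1/\theta)}\lesssim(\log N)^{(d-1)(1-1/\theta)}$ for $N\ge2$ (the finitely many remaining $N$ being harmless). This produces the first claimed estimate. For the ``example'' I would simply substitute $\psi(N)=(\log N)(\log\log N)^{1+\delta}$, for which
\[
\psi(N)^{s(d-1)}=(\log N)^{s(d-1)}(\log\log N)^{s(d-1)(1+\delta)},
\]
so that the exponent of $\log N$ becomes $(d-1)(1-1/\theta)+s(d-1)=(d-1)(s+1-1/\theta)$, giving the stated bound. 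There is no genuine obstacle in this argument, as it is a plain chaining of two earlier results; the only points that demand care are the two uniformity issues just flagged (the covering constant $c_{\bsalpha}$ from Remark~\ref{remark:bounds_on_M} and the Diophantine constant $c(\bsalpha)$ must both be independent of $N$) together with the fact, ensured by Lemma~\ref{lem:Zaremba2}, that the null set is selected independently of $N$.
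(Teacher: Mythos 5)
Your proposal is correct and follows exactly the route the paper takes: the paper derives Theorem~\ref{thm:Kronecker} as an immediate consequence of Lemma~\ref{lem:Zaremba2} combined with Theorem~\ref{thm:general2} (using $d_{T_N}=N$), precisely the chaining you carry out. Your additional care about the uniformity of the hidden constants (via Remark~\ref{remark:bounds_on_M}) and the $N$-independence of the exceptional null set makes explicit what the paper leaves implicit, and the substitution $\psi(N)=(\log N)(\log\log N)^{1+\delta}$ yielding the exponent $(d-1)(s+1-1/\theta)$ matches the stated example.
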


\begin{remark}\rm
In dimension $d=3$ the metrical result can be slightly improved. 
It follows from results in \cite{badziahin} that 
there exist $(\a_1,\a_2)\in\R^2$ such that the assumption of Lemma~\ref{lem:Zaremba} 
holds with $\psi(N)=(\log N) \log\log N$. Hence, for $d=3$, the second statement of 
Theorem~\ref{thm:Kronecker} holds with $\delta=0$.
\end{remark}

However, if we want a result for concrete $\bsalpha \in \RR^{d-1}$ for $d \ge 3$, the 
situation is even worse.
Recall that for a real number $\eta$, a $(d-1)$-tuple $\bsalpha \in (\RR \setminus \QQ)^{d-1}$ 
is said to be of {\it approximation type} $\eta$, if $\eta$ is the infimum of all 
numbers $\sigma$ for which there exists a positive constant $c=c(\sigma,\bsalpha)$ such 
that 
\begin{equation}\label{approxtype}
r(\bsh)^{\sigma} \langle \bsh \cdot \bsalpha\rangle \ge c\ \ \ 
\mbox{ for all }\ \bsh \in \ZZ^{d-1} \setminus \{\bszero\}.
\end{equation}
It is well known that the type $\eta$ of an irrational vector $\bsalpha$ is at least one. 
On the other hand it has been shown by Schmidt \cite{Schmid} that 
$\bsalpha=(\alpha_1,\ldots,\alpha_{d-1})$, with real algebraic components for which 
$1,\alpha_1,\ldots,\alpha_{d-1}$ are linearly independent over $\QQ$, is of type $\eta=1$. 
In particular, $({\rm e}^{r_1},\ldots,{\rm e}^{r_{d-1}})$ with distinct nonzero rationals 
$r_1,\ldots,r_{d-1}$ or $(\sqrt{p_1},\ldots,\sqrt{p_{d-1}})$ with distinct prime numbers 
$p_1,\ldots,p_{d-1}$ are of type $\eta=1$.

From \eqref{approxtype}, Lemma~\ref{lem:Zaremba} and Theorem~\ref{thm:general2} we obtain the following result.

\begin{theorem}\label{thm_dimd}
Let $\bsalpha \in (\RR \setminus \QQ)^{d-1}$ be of approximation type 1. Then for every $\delta>0$ we have
\begin{equation*}
\mathrm{wce}(\mathcal{P}_N(\bsalpha),\Bo) \,\lesssim\, \frac{1}{N^{s-\delta}}.
\end{equation*}
\end{theorem}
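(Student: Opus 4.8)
The plan is to feed the approximation-type hypothesis into Lemma~\ref{lem:Zaremba} through a suitable choice of the auxiliary function $\psi$, extract a lower bound on the Zaremba index $\rho_d(N,\bsalpha)$, and then convert this into an error bound via Theorem~\ref{thm:general2}.

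First I would unwind the definition of approximation type $1$. Since $\eta=1$ is the infimum of the admissible exponents $\sigma$ in~\eqref{approxtype}, for every $\eps>0$ the exponent $\sigma=1+\eps$ is admissible, so there is a constant $c=c(\eps,\bsalpha)>0$ with
\[
r(\bsh)^{1+\eps}\,\ls\bsh\cdot\bsalpha\rs \,\ge\, c\qquad\text{for all }\bsh\in\ZZ^{d-1}\setminus\{\bszero\}.
\]
Recalling that $r(\bsh)=\prod_{j=1}^{d-1}\ol{h}_j$ is a product, I would write $r(\bsh)^{1+\eps}=\prod_{j=1}^{d-1}\ol{h}_j\,\ol{h}_j^{\,\eps}$ and choose the nondecreasing function $\psi(n):=n^{\eps}$, which satisfies $\psi(1)=1$. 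With this choice the left-hand side of the hypothesis~\eqref{eq:prop-a} of Lemma~\ref{lem:Zaremba} is exactly
\[
\Big(\prod_{j=1}^{d-1}\ol{k}_j\,\psi(\ol{k}_j)\Big)\,\ls\bsalpha\cdot\bsk\rs \,=\, r(\bsk)^{1+\eps}\,\ls\bsalpha\cdot\bsk\rs \,\ge\, c,
\]
so the hypothesis of Lemma~\ref{lem:Zaremba} is met.

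Lemma~\ref{lem:Zaremba} then yields
\[
\rho_d(N,\bsalpha)\,\ge\,\frac{c'N}{\psi(N)^{d-1}}\,=\,c'\,N^{\,1-\eps(d-1)}\qquad\text{for all }N\ge1,
\]
with $c'=\min\{c,1\}$. Feeding this into Theorem~\ref{thm:general2} (using $d_{T_N}=N$, as noted after~\eqref{eq:Ta}) gives
\[
\mathrm{wce}\bigl(\mathcal{P}_N(\bsalpha),\Bo\bigr)\,\lesssim\,\rho_d(N,\bsalpha)^{-s}\,(1+\log N)^{(d-1)(1-1/\theta)}\,\lesssim\,N^{-s(1-\eps(d-1))}\,(\log N)^{(d-1)(1-1/\theta)}.
\]

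Finally I would absorb the logarithmic factor into a power of $N$ and balance exponents. Given the target $\delta>0$, I fix $\eps>0$ small enough that $s\eps(d-1)\le\delta/2$, and then use $(\log N)^{(d-1)(1-1/\theta)}\lesssim N^{\delta/2}$, which holds for all $N\ge1$ with a constant depending only on $\delta,\theta,d$. The total exponent of $N$ is then at least $s-\delta$, proving the claim. There is no genuine obstacle here beyond bookkeeping: the only subtlety is the order of quantifiers, since $\eps$ must be chosen first (as a function of $\delta,s,d$) so that the constants $c,c'$ and the hidden constant in Theorem~\ref{thm:general2}, all of which depend on $\eps$ through $\bsalpha$, remain legitimately independent of $N$.
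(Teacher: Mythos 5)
Your proof is correct and takes essentially the same route as the paper, which derives Theorem~\ref{thm_dimd} precisely by combining the approximation-type property \eqref{approxtype} with Lemma~\ref{lem:Zaremba} (via the choice $\psi(n)=n^{\eps}$) and Theorem~\ref{thm:general2}. The paper leaves these steps implicit, and your write-up fills them in accurately, including the correct quantifier order: $\eps$ is fixed in terms of $\delta$, $s$, $d$ first, so that all hidden constants depend only on $\eps$ and $\bsalpha$ but not on $N$.
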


\subsection{Rank-1 lattice point sets}\label{subsec:rank-1}

A rank-1 lattice point set is given by the points $\{\frac{n}{N} \bsg\}$ 
for $n=0,1,\ldots,N-1$, where $\bsg=(g_0,g_1,\ldots,g_{d-1})$ is a 
lattice point in $\Z^d$ and where the fractional part is applied component wise. 
We restrict ourselves to the case where $g_0=1$ 
(if $N$ is a prime number, this still covers all possible rank-1 lattice point sets). 
Then we can write a rank-1 lattice point set as 
\begin{align*}
\cP_N(\bsg) & = \X_N(\bsg) \cap [0,1)^d\\
 & = \left\{\left(\frac{n}{N},\left\{n\, \frac{g_1}{N}\right\},\ldots,\left\{n\, \frac{g_{d-1}}{N}\right\}\right) \ : \ n=0,1,\ldots,N-1\right\},
\end{align*}
where $\X_N(\bsg)=T_N(\Z^d)$ with 
\begin{equation}\
T_N=T_N(\bsg)= \begin{pmatrix} 
0 & \ldots & \ldots & 0 & 1/N \\
1 & 0        & \ldots & 0 & g_1/N \\
0 & \ddots & \ddots & \vdots & \vdots \\
\vdots & \ddots & \ddots & 0 & \vdots \\
0 & \ldots & 0 & 1 & g_{d-1}/N
\end{pmatrix} \in \mathbb{R}^{d \times d}.
\end{equation}

In view of Section~\ref{subsec:Kronecker} we see that we replace the 
possibly irrational point $\bsalpha$ by the rational point $\bsg/N$. 
So, for consistent notation, we should have used, e.g., the denotation 
$\cP_N(\bsg/N)$ for the point set. However we use $\cP_N(\bsg)$ etc.~for simplicity.
For the same reasoning we let $\rho_d(N,\bsg):=\rho(\X_N(\bsg/N))$.
The Zaremba index of rank-1 lattice point sets is a well studied quantity and we can use the known results in order to apply them in Theorem~\ref{thm:general2}. 
Since the statement of Remark~\ref{remark:bounds_on_M} holds also in this case, 
we can ignore the hidden constants in Theorem~\ref{thm:general2}. Again we treat the cases $d=2$ and $d \ge 3$ separately.

\begin{remark}\rm \label{rem:construction}
For $d=2$ the construction is again based on the boundedness of the 
partial quotients of $g_1/N$. So, given a badly approximable number $\alpha$, 
see Section~\ref{subsec:Kronecker_dim2}, one can use its 
\emph{convergents} $p_k/q_k$, $k=1,2,...$, to construct the (optimal) 
sequence of lattices $\cP_{q_k}\bigl((1,p_k)\bigr)$. 
To find an analogous construction in higher dimensions is a challenging open problem.
\end{remark}

\subsubsection{The case $d=2$}

Let $g\in \{1,\ldots,N-1\}$ with $\gcd(g,N)=1$. Let $a_1,a_2,\ldots,a_l$ be the partial quotients in the continued fraction expansion of $g/N$ and let $K(\tfrac{g}{N})=\max_{1 \le j \le l} a_j$. Then it was shown by Zaremba~\cite{zar1} (see also \cite[Theorem~5.17]{niesiam}) that the Zaremba index $\rho_2(N,\bsg)$ for $\bsg=(1,g)$ can be bounded in terms of $K(g/N)$, more precisely, that $$\frac{N}{K(g/N)+2} \le \rho_2(N,\bsg) \le \frac{N}{K(g/N)}.$$

From this result in conjunction with Theorem~\ref{thm:general2} we obtain the following result:

\begin{theorem}\label{thm_latrule2}
Let $N \in \mathbb{N}$ and $\bsg=(1,g)$ with $g \in \{1, 2, \ldots, N-1\}$ such that $\gcd(g, N) = 1$ and $K(g/N) \le C$ for some constant $C > 0$. Then, for $1\le p,\theta\le\infty$ and $s>1/p$,
\begin{equation*}
\mathrm{wce}(\mathcal{P}_N(\bsg),\Bo)  
\,\asymp\, \frac{(\log N)^{1-1/\theta}}{N^s}.
\end{equation*}
\end{theorem}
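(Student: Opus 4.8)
The plan is to reduce this statement entirely to the machinery already assembled in the paper, since Theorem~\ref{thm_latrule2} is structurally identical to Theorem~\ref{thm:Kronecker_dim2}: only the underlying lattice has changed from a Kronecker lattice based on an irrational $\alpha$ to a rank-1 lattice based on $\bsg=(1,g)$. The central observation is that everything reduces to controlling the Zaremba index $\rho_2(N,\bsg)$, and the Zaremba-type bound attributed to Zaremba~\cite{zar1} has just been quoted immediately above the theorem. First I would invoke this quoted inequality, namely
\[
\frac{N}{K(g/N)+2} \,\le\, \rho_2(N,\bsg) \,\le\, \frac{N}{K(g/N)},
\]
together with the hypothesis $K(g/N)\le C$. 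The lower estimate then gives
\[
\rho_2(N,\bsg) \,\ge\, \frac{N}{C+2},
\]
so that $\rho_2(N,\bsg)$ grows linearly in $N$ up to a constant depending only on $C$.

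Next I would feed this into Theorem~\ref{thm:general2}. Since $\det(T_N)=(-1)^{d+1}/N$ also in the rank-1 case (the matrix has the same structure as~\eqref{eq:Ta}, with $\alpha_i$ replaced by $g_i/N$), we have $d_{T_N}=N$, and the covering-constant bound of Remark~\ref{remark:bounds_on_M} applies here too as already noted in the text preceding the theorem, so the hidden constant is harmless. Theorem~\ref{thm:general2} then yields
\[
{\rm wce}(\mathcal{P}_N(\bsg),\Bo)
\,\lesssim\, \rho_2(N,\bsg)^{-s}\bigl(1+\log(N)\bigr)^{1-1/\theta}
\,\lesssim\, \frac{(\log N)^{1-1/\theta}}{N^s},
\]
where the last step substitutes the linear lower bound on $\rho_2(N,\bsg)$ and absorbs the constant $(C+2)^s$ into the implied constant (valid for $d=2$ so that $(d-1)(1-1/\theta)=1-1/\theta$). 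This establishes the upper bound ``$\lesssim$'' in the claimed $\asymp$.

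The matching lower bound ``$\gtrsim$'' is not something the rank-1 structure gives for free, and this is where I expect the only real subtlety to lie. However, the lower bound here is a \emph{general} lower bound for integration in $\Bo$ using any $N$ nodes — it does not depend on the nodes being a rank-1 lattice — so I would simply cite the same source used in Theorem~\ref{thm:Kronecker_dim2}, namely \cite[Theorem~7.3]{UU16}, which provides the optimal lower bound $(\log N)^{1-1/\theta}/N^s$ for $d=2$. Combining the two directions gives the asserted equivalence. The main obstacle, such as it is, is purely bookkeeping: verifying that the covering quantity $M_{B_N,[-1,2]^2}/|\det(B_N)|$ remains uniformly bounded in $N$ for the rank-1 generator (so that the constant in Theorem~\ref{thm:general2} is genuinely $N$-independent), but this is precisely the content of the remark cited just before the theorem, so no new work is needed and the proof is essentially a two-line assembly of Theorem~\ref{thm:general2}, the Zaremba bound, and the known lower bound.
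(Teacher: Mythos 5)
Your proposal is correct and matches the paper's own (implicit) proof exactly: the paper likewise combines the quoted Zaremba bound $\rho_2(N,\bsg)\ge N/(K(g/N)+2)$ with Theorem~\ref{thm:general2} (using that Remark~\ref{remark:bounds_on_M} carries over to the rank-1 case) for the upper bound, and relies on the general lower bound of \cite[Theorem~7.3]{UU16} as in Theorem~\ref{thm:Kronecker_dim2}. No gaps; the argument is the same two-line assembly.
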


\begin{remark}\rm 
In particular the result holds for {\it Fibonacci rules}, where $N=F_n$ and $g=F_{n-1}$, the $n^{\rm th}$ and $(n-1)^{\rm st}$ Fibonacci numbers, respectively. In this case the continued fraction coefficients of $g/N=F_{n-1}/F_n$ are all exactly 1. 
Fibonacci rules were also used by Temlyakov, see \cite{teml1993} and \cite[Section~4.1]{teml2003}.
\end{remark}

\begin{remark}\rm
Note that a famous conjecture of Zaremba \cite[p.~76]{zar} states that for 
every integer $N \ge 2$ one can find a $g\in \{1,\ldots,N\}$ with 
$\gcd(g,N)=1$ such that the continued fraction coefficients of $g/N$ are 
bounded by some constant $K$ (in fact, he conjectured that $K=5$). 
Niederreiter \cite{niezar} established this conjecture for all $N$ of the 
form $2^m, 3^m$ or $5^m$ for $m \in \mathbb{N}$. 
Bourgain and Kontorovich~\cite{BourKont} proved Zaremba's conjecture for 
almost all $N$ with a constant $K=50$. Huang~\cite{Huang} improved this 
result to show Zaremba's conjecture for almost all $N$ with constant $K = 5$.
\end{remark}

\subsubsection{The case $d \ge 3$}

It follows from a result of Zaremba~\cite{zar3} that for every $d\ge 2$ and $N \ge 2$ there exists a lattice point $\bsg \in \Z^{d}$ such that 
\begin{equation}\label{ex_zar_ind}
\rho_d(N,\bsg) \ge \frac{C_d N}{(\log N)^{d-1}}.
\end{equation}
In fact, one can choose $C_d=(d-1)!/2^{d-1}$ (see also \cite[Theorem~5.12]{niesiam}). From this result together with Theorem~\ref{thm:general2} we obtain the following theorem.

\begin{theorem}
For every $d\ge 2$ and $N \ge 2$ there exists a lattice point $\bsg \in \Z^{d-1}$ such that $$\mathrm{wce}(\mathcal{P}_N(\bsg),\Bo) \lesssim \frac{(\log N)^{(d-1)(s+1-1/\theta)}}{N^s}.$$   
\end{theorem}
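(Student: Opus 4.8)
The plan is to combine the existence result for lattice points with large Zaremba index, stated in \eqref{ex_zar_ind}, with the general worst-case error bound from Theorem~\ref{thm:general2}. The structure is entirely parallel to the proofs of Theorems~\ref{thm:Kronecker_dim2} and~\ref{thm_latrule2}: the Zaremba index controls everything, so I would simply plug a good lower bound on $\rho_d(N,\bsg)$ into the general estimate.

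More precisely, first I would fix $d\ge2$ and $N\ge2$ and invoke Zaremba's result from \cite{zar3} to select a lattice point $\bsg\in\Z^{d}$ (with first coordinate normalized to $1$, as in Subsection~\ref{subsec:rank-1}, so effectively $\bsg\in\Z^{d-1}$ after dropping $g_0=1$) for which
\[
\rho_d(N,\bsg) \,\ge\, \frac{C_d\, N}{(\log N)^{d-1}},
\]
with $C_d=(d-1)!/2^{d-1}$. Second, I would recall from Subsection~\ref{subsec:rank-1} that the rank-1 lattice point set $\cP_N(\bsg)$ is generated by $T_N(\bsg)$ with $d_{T_N}=N$, and that Remark~\ref{remark:bounds_on_M} guarantees the quantity $M_{B,[-1,2]^d}/|\det(B)|$ is uniformly bounded, so the hidden constant in Theorem~\ref{thm:general2} may be ignored.

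Third, I would apply Theorem~\ref{thm:general2} directly, which gives
\[
\mathrm{wce}(\cP_N(\bsg),\Bo) \,\lesssim\, \rho_d(N,\bsg)^{-s}\bigl(1+\log(N)\bigr)^{(d-1)(1-1/\theta)}.
\]
Substituting the lower bound on $\rho_d(N,\bsg)$ yields a factor $\rho_d(N,\bsg)^{-s}\lesssim (\log N)^{s(d-1)}/N^s$, and multiplying by the remaining $(\log N)^{(d-1)(1-1/\theta)}$ term collects the logarithmic exponents into $(d-1)(s+1-1/\theta)$, producing exactly the claimed bound $(\log N)^{(d-1)(s+1-1/\theta)}/N^s$.

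Since all the genuine work is already packaged into \eqref{ex_zar_ind} and Theorem~\ref{thm:general2}, there is no real obstacle in this final step; it is a bookkeeping exercise in substitution and in adding exponents of $\log N$. If anything, the only point requiring care is ensuring the constant $C_d^{-s}$ is absorbed into the implied constant and that the index convention ($\bsg\in\Z^{d}$ versus the reduced $\bsg\in\Z^{d-1}$ after fixing $g_0=1$) is handled consistently, but neither affects the stated order of convergence.
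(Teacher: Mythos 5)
Your proposal is correct and follows exactly the paper's (implicit) proof: combine Zaremba's existence result \eqref{ex_zar_ind} with the general bound of Theorem~\ref{thm:general2} (using $d_{T_N}=N$ and Remark~\ref{remark:bounds_on_M} to discharge the hidden constant), and collect the exponents $s(d-1)+(d-1)(1-1/\theta)=(d-1)(s+1-1/\theta)$. Nothing is missing.
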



However, it remains an open question how to construct, for given $d$ and $N$, 
lattice points 
$\bsg \in \Z^{d-1}$ which achieve the lower bound \eqref{ex_zar_ind}. 
Without loss of generality one can restrict to the search space 
$\{g \in \{1,2,\ldots,N-1\} \ : \ \gcd(g,N)=1\}^{d-1}$ of size 
$\phi(N)^{d-1}$, where $\phi$ denotes Euler's totient function. 
This is too large for a full search already for moderately large $N$ 
and $d \ge 3$. So far one relies on computer search to find good 
generating vectors, usually based on the fast component-by-component 
construction \cite{NC06a, NC06b}. 

Korobov~\cite{kor} suggested to consider lattice point sets with generating 
vectors $\bsg=(1,g,g^2,\ldots,g^{d-1})$ in $\Z^d$ with $g \in \{1,2,\ldots,N-1\}$ 
such that $\gcd(g,N)=1$. The search space for lattice points of this form 
reduces to $\phi(N)$. 
At least for prime powers $N$ and in dimension $d=3$ there is an existence 
result of Larcher and Niederreiter~\cite{LarNie} for $\bsg=(1,g,g^2)$ with 
$$\rho_3(N,\bsg) \ge \frac{C N}{(\log N)^2}.$$ 
For $d>3$ this is open.

\noindent {\sc Josef Dick, Kosuke Suzuki and Takehito Yoshiki} 

\noindent School of Mathematics and Statistics, The University of New South Wales, Sydney NSW 2052, Australia, 
email: josef.dick(AT)unsw.edu.au, kosuke.suzuki1(AT)unsw.edu.au, takehito.yoshiki1(AT)unsw.edu.au\\

\noindent {\sc Friedrich Pillichshammer}

\noindent Institut f\"ur Finanzmathematik und angewandte Zahlentheorie, Johannes Kepler Universit\"at Linz, Altenbergerstra{\ss}e 69, 4040 Linz, Austria,
email: friedrich.pillichshammer(AT)jku.at\\

\noindent {\sc Mario Ullrich}

\noindent Institut f\"ur Analysis, Johannes Kepler Universit\"at Linz, Altenbergerstra{\ss}e 69, 4040 Linz, Austria, 
email: mario.ullrich(AT)jku.at\\

\end{document}